 \journalname{Discrete \& Computational Geometry}
\newcommand\showchange[2]{\IfFileExists{option-#1-show}{\marginpar{$\bullet$ #1}$^\bullet$\footnote{#1: #2}}{}}
\spnewtheorem{assumption}[theorem]{Running assumption}{\bf}{\it}
\spnewtheorem{notation}[theorem]{Notation}{\bf}{\it}
\spnewtheorem*{proofC1}{Proof of Claim~1}{\it}{\rm}
\spnewtheorem*{proofC2}{Proof of Claim~2}{\it}{\rm}
\spnewtheorem*{proofC3}{Proof of Claim~3}{\it}{\rm}
\spnewtheorem{fact}[theorem]{Fact}{\bf}{\it}
\newbox\refisodiamannulus
\spnewtheorem*{proofxx}{Proof of Theorem~\copy\refisodiamannulus}{\it}{\rm}
\newtheorem{theorem}{Theorem}[section]
\newtheorem{lemma}[theorem]{Lemma}
\newtheorem{problem}[theorem]{Problem}
\newtheorem{assumption}[theorem]{Running assumption}
\newtheorem{notation}[theorem]{Notation}
\theoremstyle{definition}
\newtheorem{definition}{Definition}
\newcommand{\R}{\mathbb R}
\newcommand{\N}{\mathbb N}
\newcommand{\lambdaone}{\lambda_1}
\newcommand{\lambdatwo}{\lambda_2}
\newcommand{\polar}{{\mathrm{pol}}}   
\newcommand{\polarindex}{_\polar}
\newcommand{\M}{\mathcal{M}}
\newcommand{\Ha}{\mathcal{H}}
\newcommand{\TA}{\tilde{A}}
\newcommand{\X}{\boldsymbol{X}}
\newcommand{\V}{\boldsymbol{V}}
\newcommand{\x}{\boldsymbol{x}}
\newcommand{\y}{\boldsymbol{y}}
\newcommand{\Y}{\boldsymbol{Y}}
\newcommand{\BS}{\boldsymbol{S}}
\newcommand{\s}{\boldsymbol{s}}
\newcommand{\ba}{\boldsymbol{a}}
\DeclareMathOperator{\diam}{diam}
\DeclareMathOperator{\dist}{dist}
\DeclareMathOperator{\conv}{conv}
\newcommand\setsep{:\ }
\newcommand{\norm}[1]{\left\|#1\right\|}
\newcommand{\abs}[1]{\left|#1\right|}
\newcommand{\NOOPxbeforeThm}{}
\def\bred#1\ered{{\color{red}#1}}
\begin{document}
\title{A Tur\'an-type theorem for large-distance graphs in Euclidean spaces, and related isodiametric problems}

\iftrue
\titlerunning{A Tur\'an-type theorem for large-distance graphs, and related isodiametric problems}

\authorrunning{
M. Dole\v{z}al,
J. Hladk\'y,
J.~Kol\'a\v{r},
T.~Mitsis,
C.~Pelekis
and
V.~Vlas\'ak
}
\author{
Martin Dole\v{z}al
\and
Jan Hladk\'y
\and
Jan~Kol\'a\v{r}
\and
Themis~Mitsis
\and
Christos~Pelekis
\and
V\'aclav~Vlas\'ak
}
\institute{
Martin Dole\v{z}al
\at
Institute of Mathematics, Czech Academy of Sciences, \v{Z}itn\'a 25, 
115 67,   Praha 1, 
Czech Republic.  
Research supported by the GA\v{C}R project 17-27844S and RVO: 67985840.
\email{dolezal@math.cas.cz}
\and
Jan Hladk\'y
\at
Institute of Mathematics, Czech Academy of Sciences, \v{Z}itn\'a 25, Praha 1, Czech Republic. Research supported by  GA\v{C}R project 18-01472Y  and RVO: 67985840. \email{honzahladky@gmail.com} 
\and
Jan~Kol\'a\v{r}
\at
        Institute of Mathematics, Czech Academy of Sciences, \v{Z}itn\'a 25, Praha 1, Czech Republic.
                Research supported by the EPSRC grant EP/N027531/1 and by RVO: 67985840.
                \email{kolar@math.cas.cz}
\and
Themis~Mitsis
\at
Department of Mathematics and Applied Mathematics, University of Crete, 70013 Heraklion, Greece. 
\email{themis.mitsis@gmail.com}
\and
Christos~Pelekis
\at
Institute of Mathematics, Czech Academy of Sciences, \v{Z}itn\'a 25, Praha 1, Czech Republic. Research supported by the Czech Science Foundation, grant number GJ16-07822Y, by GA\v{C}R project 18-01472Y  and RVO: 67985840. \email{pelekis.chr@gmail.com}
\and
V\'aclav~Vlas\'ak
\at
Faculty of Mathematics and Physics, Charles University, Sokolovsk\'a 83, 18675 Praha 8, Czech Republic. \email{vlasakvv@gmail.com}
}
\else
\author{
Martin Dole\v{z}al \thanks{Institute of Mathematics, Czech Academy of Sciences, \v{Z}itn\'a 25, 
115 67,   Praha 1, 
Czech Republic.  
Research supported by the GA\v{C}R project 17-27844S and RVO: 67985840.
E-mail: dolezal@math.cas.cz}
\and
Jan Hladk\'y \thanks{Institute of Mathematics, Czech Academy of Sciences, \v{Z}itn\'a 25, Praha 1, Czech Republic. Research supported by  GA\v{C}R project 18-01472Y  and RVO: 67985840. E-mail: honzahladky@gmail.com} 
\and
Jan~Kol\'a\v{r}
        \thanks{Institute of Mathematics, Czech Academy of Sciences, \v{Z}itn\'a 25, Praha 1, Czech Republic.
                Research supported by the EPSRC grant EP/N027531/1 and by RVO: 67985840.
                E-mail: kolar@math.cas.cz}
\and
Themis~Mitsis\thanks{Department of Mathematics and Applied Mathematics, University of Crete, 70013 Heraklion, Greece. 
E-mail: themis.mitsis@gmail.com}
\and
Christos~Pelekis\thanks{Institute of Mathematics, Czech Academy of Sciences, \v{Z}itn\'a 25, Praha 1, Czech Republic. Research supported by the Czech Science Foundation, grant number GJ16-07822Y, by GA\v{C}R project 18-01472Y  and RVO: 67985840. E-mail: pelekis.chr@gmail.com}
\and
V\'aclav~Vlas\'ak\thanks{Faculty of Mathematics and Physics, Charles University, Sokolovsk\'a 83, 18675 Praha 8, Czech Republic. E-mail:  vlasakvv@gmail.com}
}
\fi

\date{January 8, 2020}

\def\makeheadbox{{\hbox{}}}

\maketitle

\begin{abstract}
Given a measurable set $A\subset \mathbb R^d$ we consider the
\emph{large-distance graph} $\mathcal{G}_A$, on the ground set $A$, in which
each pair of points from $A$ whose distance is bigger than~2 forms an edge. We
consider the problems of maximizing the $2d$-dimensional Lebesgue measure of
the edge set as well as the $d$-dimensional Lebesgue measure of the vertex set
of a large-distance graph in the $d$-dimensional Euclidean space that contains
no copies of a complete graph on $k$ vertices. The former problem may be seen
as a continuous analogue of Tur\'an's classical graph theorem, and the latter
as a ``graph-theoretic'' analogue of the classical isodiametric problem. Our main result yields an analogue of Mantel's theorem for large-distance graphs. Our approach employs an isodiametric inequality in an annulus, which might be of independent interest.  

\keywords{Tur\'an's theorem \and isodiametric problem \and  distance graph}
\subclass{05C63 \and 51K99 \and 05C35 \and  51M16}
\end{abstract}

\section{Prologue, related work and main results}

Let us begin with a folklore result of Tur\'an which pertains to  graphs  
that contain no complete graph
(also called a \emph{clique})
on $k$ vertices.  Given a graph $G=(V,E)$, we denote by $|V|$ and $|E|$ the number of its vertices and edges, respectively.
                \NOOPxbeforeThm

\begin{theorem}[Tur\'an \cite{turan}]
\label{turanFund} Let $G=(V,E)$ be a graph  which does not contain a complete graph 
on $k$ vertices. Then $|E|\le  \frac{1}{2}\left(1-\frac{1}{k-1}\right) \cdot |V|^2$.  
\end{theorem}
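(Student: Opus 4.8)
The plan is to establish the inequality by induction on the number of vertices $n=|V|$, in the spirit of Tur\'an's original argument. The base case $n\le k-1$ is immediate, since then $|E|\le\binom n2=\tfrac12\bigl(1-\tfrac1n\bigr)n^2\le\tfrac12\bigl(1-\tfrac1{k-1}\bigr)n^2$. So assume $n\ge k$, that the bound is known for all graphs on fewer than $n$ vertices, and --- since it suffices to treat this case --- that $G$ has the largest number of edges among all $K_k$-free graphs on its vertex set.

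The first real step is to extract a $K_{k-1}$ from such an edge-maximal $G$. If $G$ were complete it would contain $K_k$ because $n\ge k$, which is excluded; hence some pair $u,v$ is non-adjacent, and by maximality the graph $G+uv$ contains a copy of $K_k$. This copy must use the new edge, so it consists of $u$, $v$, and $k-2$ further vertices each adjacent in $G$ to both; those $k-2$ vertices together with $u$ span a clique on $k-1$ vertices in $G$.

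Fix such a clique on a vertex set $A$ with $|A|=k-1$ and put $B=V\setminus A$, so $|B|=n-k+1$. I would now split $E$ into three parts: the $\binom{k-1}2$ edges inside $A$; the edges between $A$ and $B$, of which there are at most $(k-2)(n-k+1)$ because no vertex of $B$ can be joined to all of $A$ without completing a $K_k$; and the edges inside $B$, of which there are at most $\tfrac12\bigl(1-\tfrac1{k-1}\bigr)(n-k+1)^2$ by the induction hypothesis applied to the $K_k$-free graph $G[B]$. A short computation shows that these three bounds add up to \emph{exactly} $\tfrac{k-2}{2(k-1)}\,n^2=\tfrac12\bigl(1-\tfrac1{k-1}\bigr)n^2$, which closes the induction.

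Since this is a classical theorem, I do not expect a genuine obstacle; the only points requiring attention are the extraction of the clique on $k-1$ vertices from an edge-maximal example and the verification that the three partial bounds telescope precisely to the constant $\tfrac{k-2}{k-1}$ (all of the slack in the argument sits in the combinatorial estimates, not in the arithmetic). An alternative route, which I might use instead, is Zykov's symmetrization: one shows that for an edge-maximal $K_k$-free graph non-adjacency is an equivalence relation, so the extremal graph is complete multipartite with at most $k-1$ classes of sizes $n_1,\dots,n_r$; then $|E|=\tfrac12\bigl(n^2-\sum_i n_i^2\bigr)\le\tfrac12\bigl(n^2-\tfrac{n^2}{k-1}\bigr)$ by the Cauchy--Schwarz inequality $\sum_i n_i^2\ge\tfrac1r(\sum_i n_i)^2$, yielding the same bound.
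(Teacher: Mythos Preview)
The paper does not give its own proof of this statement: Theorem~\ref{turanFund} is quoted as Tur\'an's classical result and simply cited to~\cite{turan}, with no argument supplied. So there is nothing in the paper to compare against.

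Your proof is correct. The induction via removal of a $K_{k-1}$ is exactly Tur\'an's original approach, and the arithmetic does telescope as you claim: with $m=n-k+1$,
\[
\binom{k-1}{2}+(k-2)m+\frac{k-2}{2(k-1)}m^2=\frac{k-2}{2(k-1)}\bigl((k-1)^2+2(k-1)m+m^2\bigr)=\frac{k-2}{2(k-1)}n^2.
\]
One small point worth tightening: in the inductive step you pass to an edge-maximal $K_k$-free graph on the same vertex set, prove the bound there, and silently infer it for the original $G$; this is fine because adding edges only helps, but it is worth saying explicitly. The alternative Zykov route you sketch is also valid and equally standard.
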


For $k=3$, this result is due to Mantel (see~\cite{Mantel}). The extremal graph is obtained by dividing the vertex set into $k-1$ pairwise disjoint subsets whose sizes are as equal as possible, and by joining two vertices with an edge if and only if they belong to different subsets. In other words, the extremal graph is the complete balanced $(k-1)$-partite graph. 
Tur\'an's theorem is a fundamental result in extremal graph theory that has been generalised in a plethora of ways (see \cite{BollobasExtremal,SimonovitsSos}). 

In this paper, we consider similar extremal questions for measure graphs whose edge set definition is based on distances in an Euclidean space.

\subsection{Large-distance graphs}
Already in the 1970's it was realized (see \cite{Bollobas,katonaone,katonatwo,Katona}) that several results from extremal graph theory have measure-theoretic counterparts. In this setting, one is interested in the maximum ``number of edges'' in \emph{measure graphs}, i.e., graphs whose vertex set corresponds to some measure space $X$ and whose edge set corresponds to a symmetric subset of the product space that does not intersect the diagonal, i.e., it contains no points of the form $(x,x)$, where $x\in X$. This line of research received a substantial impetus recently due to the emergence of the theory of limits of dense graphs (see~\cite{Lovasz}). In this work we look at a special case of measure graphs whose vertex-set is formed by the points of a measurable set in the Euclidean space, and whose edge-set is formed by pairs of vertices that are at sufficiently large distance. 
In order to be more precise we need to introduce some  piece of notation. 

Here and later, $\lambda_d(\cdot)$ denotes $d$-dimensional Lebesgue measure.  Given a point $p\in \R^d$ and a positive 
real $r$, we denote by $D(p,r)$ the set of points whose distance from $p$ is less than 
or equal to $r$, and by $D(p,r)^o$ its interior. The boundary of $D(p,r)$ will be denoted as $S(p,r)$. 
If $r=1$ we will refer to $D(p,1)$ as a \emph{unit ball}, or a \emph{unit $d$-ball} if we want to emphasize the dimension of the underlying Euclidean space. The $d$-dimensional 
Lebesgue measure of $D(p,1)$ is denoted $\omega_d$; recall that $\omega_{d}=\frac{\pi^{d/2}}{\Gamma(d/2+1)}\;.$
The Euclidean distance between two points $p,q\in \R^d$ is denoted $\|p-q\|$ and 
the distance between two sets $A,B\subset \R^d$ is defined as 
\[ \text{dist}(A,B) = \inf\{\|a-b\|: a\in A, b\in B\} \, . \]
We shall be interested in the ``maximum number'' of edges in graphs that are associated to the distance set of measurable  subsets of the Euclidean space, and are defined as follows.
                \NOOPxbeforeThm

\begin{definition}[Large-distance graphs] 
Let $A$ be a measurable subset of $\R^d$. The \emph{large-distance graph} corresponding to $A$,  denoted $\mathcal{G}_A$,  is defined as follows: 
The vertex set of $\mathcal{G}_A$ is the set $A$. The edge set of $\mathcal{G}_A$, denoted $\mathcal{E}_A$,  is defined as 
\[ \mathcal{E}_A = \{ (p,q)\in A\times A : \norm{ p - q } > 2  \} . \]
The ``number of edges'' in $\mathcal{G}_A$ is defined as $e(\mathcal{G}_A) := \frac{1}{2}\cdot \lambda_{2d}\left(\mathcal{E}_A \right)$.
\end{definition}

Let us remark that the reason for setting the distance threshold to~2 is that our main results have a particularly nice formulation. On the other hand, only trivial rescaling would have to be introduced in order to change the distance threshold to any other positive number. 

To the best of our knowledge, large-distance graphs corresponding to measurable subsets of the Euclidean space have not been systematically studied so far. 
There exists a vast amount of literature on the, so-called, \emph{distance graphs}, where two points are joined with an edge if their distance is equal to $1$ (see, for example, \cite{ShabanovRaigorodskii}), but most research is mainly driven by the well-known problem regarding the chromatic number of the plane. A rather similar graph, whose vertex set corresponds to points of the $d$-dimensional sphere and whose edge set is formed by pairs of points that are at sufficiently large Euclidean distance, is the so-called \emph{Borsuk graph} (see~\cite[p. 30]{Matousek}, or~\cite{KahleFigueroa}). However, most of the related research so far appears to focus on the chromatic number of Borsuk graphs. In this article we focus on the problem of maximizing the ``amount'' of vertices as well as the ``amount'' of edges in large-distance graphs subject to the constraint that they do not contain a copy of a fixed finite graph. The main object of our study in large-distance graphs is introduced in the following.
                \NOOPxbeforeThm

\begin{definition}[$H$-free large-distance graphs] 
Suppose that $A\subset \R^d$ is measurable and let $H= (V(H), E(H))$ be a finite, simple, graph on $k\ge 2$ labeled  vertices, whose labels are represented by the set $\{1,\ldots,k\}$. Let $\mathcal{G}_A$ be the large-distance graph corresponding to $A$, and let $\mathcal{G}_A\langle H\rangle\subset A^k$ be 
the set consisting of all $k$-tuples, $(p_1,\ldots,p_k)$, of points in $A$ 
for which $(p_i,p_j)\in \mathcal{E}_A$ whenever $ij\in E(H)$. We say that $\mathcal{G}_A$ is \emph{$H$-free} if $\mathcal{G}_A\langle H\rangle = \emptyset$. 
We say that $\mathcal{G}_A$ is \emph{essentially $H$-free} if $\lambda_{k d}(\mathcal{G}_A\langle H\rangle) = 0$. 
\end{definition}

Of course, the most basic case to investigate is the case of $K_k$-free large-distance graphs, where $K_k$ denotes a clique of order $k$. We shall be interested in the problem of maximizing the $d$-dimensional Lebesgue measure of the vertex set, as well as in the problem of maximizing the $2d$-dimensional Lebesgue measure of the edge set of a $K_k$-free large-distance graph corresponding to a measurable subset of $\R^d$. 
The former problem has been considered in \cite{pelekis}, where some bounds are obtained in the $2$-dimensional case. The latter has been considered in~\cite{Bollobas}, where a Tur\'an-type theorem is  obtained in the setting of measure graphs.
In this article we further investigate both problems and we additionally demonstrate that they are interrelated.     

Let us now state a counterpart of Tur\'an's theorem for large-distance graphs.
                \NOOPxbeforeThm
\begin{theorem}
\label{cauchy}
Fix a positive integer $k\geq 2$.  
Suppose that $A\subset \R^d$ is a 
measurable set such that $\mathcal{G}_A$ is essentially $K_k$-free.

Then 
\begin{equation}\label{eq:cauchy}
        e(\mathcal{G}_A) \leq \frac{1}{2}\cdot\left( 1- \frac{1}{k-1} \right) \cdot \lambda_d(A)^2
        .
\end{equation}
Moreover, the inequality becomes an equality if and only if there exists $c\in [0,\omega_d]$ such that $A$ is a disjoint union of
a set of $\lambda_d$-measure zero and
$k-1$ disjoint sets $A_1,\ldots,A_{k-1}$ whose pairwise distances are at least $2$ and satisfy $\diam(A_i)\le 2$ and $\lambda_d(A_i)=c$, for all $i=1,\ldots,k-1$. 
\end{theorem}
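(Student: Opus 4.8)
The plan is to prove~\eqref{eq:cauchy} by a sampling reduction to Tur\'an's theorem (Theorem~\ref{turanFund}), to verify the extremal examples directly, and to show they are the only ones by induction on $k$ through a neighbourhood-peeling argument. For the inequality we may assume $0<\lambda_d(A)<\infty$: if $\lambda_d(A)=0$ both sides vanish; for $k=2$ essential $K_2$-freeness forces $\lambda_{2d}(\mathcal{E}_A)=0$ (so the left side is $0$); and for $k\ge3$ with $\lambda_d(A)=\infty$ the right side is $\infty$. Let $X_1,\dots,X_n$ be independent uniform points of $A$, and let $G_n$ be the finite graph on $\{1,\dots,n\}$ with $ij\in E(G_n)$ iff $\|X_i-X_j\|>2$. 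Since $\mathcal{G}_A\langle K_k\rangle$ is $\lambda_{kd}$-null, a Fubini argument over the $\binom nk$ index-tuples shows that $G_n$ is $K_k$-free for almost every sample, so $e(G_n)\le\frac12(1-\frac1{k-1})n^2$ almost surely by Theorem~\ref{turanFund}; taking expectations gives $\binom n2\,\lambda_{2d}(\mathcal{E}_A)/\lambda_d(A)^2=\mathbb{E}\,e(G_n)\le\frac12(1-\frac1{k-1})n^2$, and letting $n\to\infty$ yields~\eqref{eq:cauchy}.

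\emph{Sufficiency of the extremal configuration.} If $A$ has the stated form, then any $k$-tuple of points of $A$ avoiding the null part has, by the pigeonhole principle, two entries in a common $A_i$, hence at distance $\le\diam(A_i)\le 2$; thus $\mathcal{G}_A\langle K_k\rangle$ lies in a $\lambda_{kd}$-null set and $\mathcal{G}_A$ is essentially $K_k$-free. Since $\{(p,q):\|p-q\|=2\}$ is $\lambda_{2d}$-null, each cross-product $A_i\times A_j$ ($i\ne j$) lies in $\mathcal{E}_A$ up to a null set, while each $A_i\times A_i$ is disjoint from $\mathcal{E}_A$; hence $e(\mathcal{G}_A)=\frac12(k-1)(k-2)c^2=\frac12(1-\frac1{k-1})\lambda_d(A)^2$.

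\emph{Necessity.} We induct on $k$. For $k=2$, essential $K_2$-freeness means $\diam(A)\le 2$ after removing a null set, so $\lambda_d(A)\le\omega_d$ by the classical isodiametric inequality, which is the asserted form. For the step, suppose $A$ realises equality, and set $L:=\lambda_d(A)\in(0,\infty)$, $N(x):=\{y\in A:\|x-y\|>2\}$, $g(x):=\lambda_d(N(x))$ (a measurable function with $\int_A g=2e(\mathcal{G}_A)$), and $M:=\operatorname{ess\,sup}_A g$. A Fubini argument shows $\mathcal{G}_{N(x)}$ is essentially $K_{k-1}$-free for a.e.\ $x$, so~\eqref{eq:cauchy} applied with $k-1$ gives $\lambda_{2d}(\mathcal{E}_{N(x)})\le(1-\frac1{k-2})g(x)^2$. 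Splitting $\mathcal{E}_A$ along $A=N(x)\sqcup(A\setminus N(x))$ and bounding $\int_{A\setminus N(x)}g\le M\,\lambda_d(A\setminus N(x))$ (definition of $M$) and the cross term by $g(x)\lambda_d(A\setminus N(x))$ yields, for a.e.\ $x$,
\[
2e(\mathcal{G}_A)\le\psi(g(x)),\qquad \psi(s):=\Big(1-\tfrac1{k-2}\Big)s^2+s(L-s)+M(L-s).
\]
Choosing $x$ with $g(x)$ arbitrarily close to $M$, the resulting bound $2e(\mathcal{G}_A)\le\psi(M)$, combined with $2e(\mathcal{G}_A)=(1-\frac1{k-1})L^2$, rearranges to $\bigl(\tfrac{k-1}{k-2}M-L\bigr)^2\le 0$; hence $M=(1-\frac1{k-1})L$ and equality holds throughout. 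Integrating $2e(\mathcal{G}_A)\le\psi(g(x))$ over $A$ and inserting this value of $M$ reduces to $\int_A g^2\le\tfrac1L\bigl(\int_A g\bigr)^2$, which together with Cauchy--Schwarz is an equality; therefore $g\equiv M$ a.e.\ on $A$. Now fix $x$ at which all the a.e.\ statements hold. Equality in the $\psi$-bound forces (i) $\lambda_{2d}(\mathcal{E}_{N(x)})=(1-\frac1{k-2})\lambda_d(N(x))^2$, so by the inductive hypothesis (applicable since $\lambda_d(N(x))=M>0$) the set $N(x)$ is, up to a null set, a disjoint union of $k-2$ sets of measure $\frac{L}{k-1}$ with diameters $\le 2$ and pairwise distances $\ge 2$; (ii) $\dist(N(x),A\setminus N(x))\ge 2$ off a null set; and (iii) $\diam(A\setminus N(x))\le 2$ off a null set. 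Taking these $k-2$ sets together with $A\setminus N(x)$ (of measure $\frac{L}{k-1}$, hence $\le\omega_d$ by the isodiametric inequality) as the parts, and absorbing all exceptional null sets into a single $\lambda_d$-null set, gives the asserted decomposition with $c=\frac{L}{k-1}$.

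\emph{Main obstacle.} The inequality and the sufficiency are routine; the crux is the necessity step—namely squeezing $M$ down to the exact value $(1-\frac1{k-1})L$ via the quadratic $\psi$ (which needs both the pointwise bound at $M$ and the integrated bound combined with Cauchy--Schwarz), and then keeping track of the three separate equality conditions together with the bookkeeping that converts the resulting ``mod null'' geometric statements into the clean decomposition in the theorem.
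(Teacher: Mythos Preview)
Your argument is correct. For the bound~\eqref{eq:cauchy}, your random-sampling reduction to the finite Tur\'an theorem is essentially the same idea as the paper's approach (the paper phrases it via graphons and cites the graphon Tur\'an theorem, which is itself proved by exactly this sampling trick), so there is no real difference here.

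The characterisation of equality, however, you handle quite differently. The paper simply invokes the structural half of the graphon Tur\'an theorem (Corollary~16.11 in Lov\'asz's book): that extremal $K_k$-free graphons are complete balanced $(k-1)$-partite, which immediately yields the partition $A_1,\dots,A_{k-1}$; the diameter and distance conditions then follow from a short density-point argument. You instead prove the structure from scratch by induction on $k$, via a continuous analogue of the Zykov/Moon--Moser neighbourhood argument: fix a near-maximum-degree vertex $x$, split $A$ as $N(x)\sqcup (A\setminus N(x))$, apply the inductive hypothesis to $N(x)$, and squeeze the ess sup $M$ of $g$ to its exact value using the quadratic $\psi$ together with Cauchy--Schwarz. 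This buys self-containment---you never need the (nontrivial) equality case of graphon Tur\'an---at the price of a longer proof and more null-set bookkeeping. One small point: your item~(iii), that $\diam(A\setminus N(x))\le 2$ off a null set, does not literally follow from equality in the third term of $\psi$; rather it follows from~(ii) combined with $g\equiv M$ (for a.e.\ $z\in A\setminus N(x)$ one gets $N(z)=N(x)$ mod null, hence any two density points of $A\setminus N(x)$ are at distance $\le 2$). This is implicit in what you have written, but worth making explicit.
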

Theorem~\ref{cauchy} is not really new. The bound~\eqref{eq:cauchy} follows from an old result of Bollob\'as, \cite[Theorem~1]{Bollobas}. In Section~\ref{moonMoser}, we deduce it, including the moreover part, using the formalism of graphons. 

\begin{remark}\label{remIsodiametric}
Note that the constant $c$ in Theorem~\ref{cauchy} cannot be arbitrary. Indeed the classical Isodiametric inequality (see e.g.~\cite[Theorem~11.2.1]{Burago_Zalgaller}) asserts that if $X\subset \R^d$ is a measurable set of diameter at most~2 then 
$\lambda_d(X) \le  \omega_d$. The equality is attained only when $X$ is a unit ball (modulo a nullset).
\end{remark}

At this point it may seem that the setting of large-distance graphs does not provide anything new: the bound in the above theorem follows in a rather straightforward way from a general theory of measure graphs (and from the recent progress on graph limits), and there are obvious constructions showing that these general bounds are tight even within the class of large-distance graphs. There is one important difference, though. Tur\'an's theorem is scale-free; for example knowing the optimal construction of a 100-vertex triangle-free graph allows us also to construct the optimal 100000-vertex triangle-free graph. However, this is not the case for large-distance graphs as we saw in Remark~\ref{remIsodiametric}. So, we ask for absolute bounds on the measure of vertices and edges an $K_k$-free large-distance graph may have. We call this the Clique-isodiametric problem.
                \NOOPxbeforeThm

\begin{problem}[Clique-isodiametric problem]
\label{gen_isodiam} 
Suppose that we are given $k,d\in\mathbb{N}$. Find
\[\mathfrak{V}_{d,k}=\sup_A \lambda_d(A)\quad\mbox{and}\quad \mathfrak{E}_{d,k}=\sup_A e(\mathcal{G}_A)\;,\]
where the suprema range over all measurable $K_k$-free sets $A\subset \mathbb{R}^d$.
\end{problem}

It is not difficult to see that $\mathfrak{V}_{d,k}\le (k-1) 2^{d}\omega_d$ and $\mathfrak{E}_{d,k}\le (k-1)^2 2^{2d-1}\omega^2_d$, and in particular are finite. Indeed, a $K_k$-free set $A$ cannot contain more than $k-1$ points $\{ p_i \}$ that satisfy $\norm{ p_i - p_j } > 2$,  whenever $i\neq j$.
If $\{ p_1, \dots, p_m \} \subset A$, where $m\le k-1$, is a maximal family of points with this property then $A\subset \bigcup_{i=1}^mD(p_i,2)$. Hence, by the Isodiametric inequality, $\lambda_d(A)\le (k-1)2^d\omega_d$. Consequently, $e(\mathcal{G}_A)\le (k-1)^2 2^{2d-1}\omega^2_d$ (and an improvement by a factor of $1-\frac{1}{k-1}$ follows from Theorem~\ref{cauchy}).

Notice that the first part of Problem~\ref{gen_isodiam} makes sense even for $k=2$, i.e., when $\mathcal{G}_A$ has no edges. Note that in this case Problem~\ref{gen_isodiam} is equivalent to the isodiametric problem.

We are unable to solve Problem~\ref{gen_isodiam} in general, and  focus on the case
$d=2$ and $k=3$. For this particular choice of parameters, we obtain the following $2$-dimensional analogue of Mantel's theorem.
                \NOOPxbeforeThm

\begin{theorem}\label{2d_mantel}
Let $A\subset \R^2$ be a measurable set for which $\mathcal{G}_A$ is $K_3$-free. Then $\lambdatwo(A) \le 2\pi$ as well as 
$e(\mathcal{G}_A) \le \pi^2$. 

Moreover, each set attaining the first bound is (up to changes on a set of $\lambda_2$-measure zero) a union of two non-overlapping unit balls. Each set attaining the second bound is (up to changes on a set of $\lambda_2$-measure zero) a disjoint union of two unit balls whose distance is larger than or equal to $2$.
\end{theorem}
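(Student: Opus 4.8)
\emph{The edge bound follows from the vertex bound.}
Since $\mathcal G_A$ is $K_3$-free it is in particular essentially $K_3$-free, so Theorem~\ref{cauchy} with $k=3$ gives $e(\mathcal G_A)\le\frac12\bigl(1-\frac12\bigr)\lambdatwo(A)^2=\frac14\lambdatwo(A)^2$; assuming the vertex bound $\lambdatwo(A)\le2\pi$, this is $\le\pi^2$. If moreover $e(\mathcal G_A)=\pi^2$, equality must hold both in $\lambdatwo(A)\le2\pi$ and in the Tur\'an inequality of Theorem~\ref{cauchy}. The former, via the vertex equality case below, makes $A$ a union of two non-overlapping unit balls up to a $\lambdatwo$-nullset; the latter, via the ``moreover'' part of Theorem~\ref{cauchy}, forces the distance between these two balls to be at least $2$. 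So everything reduces to the vertex statement.

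\emph{Set-up for the vertex bound.}
If $\diam(A)\le2$ the classical isodiametric inequality (Remark~\ref{remIsodiametric}) gives $\lambdatwo(A)\le\pi<2\pi$, so assume $\diam(A)>2$. As observed after Problem~\ref{gen_isodiam}, $A$ is bounded, hence $\diam(A)$ is attained; fix $a,b\in\overline A$ with $\norm{a-b}=\diam(A)=:d>2$ (every estimate below is insensitive to replacing $a,b$ by nearby points of $A$). Because $\mathcal G_A$ is $K_3$-free, no point of $A$ is at distance $>2$ from both $a$ and $b$, so $A\subseteq D(a,2)\cup D(b,2)$. Partition $A$ into $A_a=A\cap\bigl(D(a,2)\setminus D(b,2)\bigr)$, $A_b=A\cap\bigl(D(b,2)\setminus D(a,2)\bigr)$ and $A_{ab}=A\cap D(a,2)\cap D(b,2)$. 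Two easy observations: (i) $\diam(A_a)\le2$ and $\diam(A_b)\le2$, since two points of $A_a$ at distance $>2$ would, together with $b$, span a triangle in $\mathcal G_A$; (ii) since $d=\diam(A)$, every point of $A_b$ is within distance $d$ of $a$ while being at distance $>2$ from $a$, so $A_b$ lies in the annulus $\{x:2\le\norm{x-a}\le d\}$, and symmetrically $A_a\subseteq\{x:2\le\norm{x-b}\le d\}$.

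\emph{The estimate.}
It remains to prove $\lambdatwo(A_a)+\lambdatwo(A_{ab})+\lambdatwo(A_b)\le2\pi$. The crude bounds $\lambdatwo(A_a),\lambdatwo(A_b)\le\pi$ from (i) together with $\lambdatwo(A_{ab})\le\lambdatwo\bigl(D(a,2)\cap D(b,2)\bigr)$ are not sufficient, because the lens $D(a,2)\cap D(b,2)$ has area as large as $\frac{8\pi}{3}-2\sqrt3$ when $d$ is near $2$. The correct mechanism is a trade-off: for $d\ge4$ the lens is empty and (i) already gives $\lambdatwo(A)\le2\pi$; for $d\in(2,4)$ the annuli in (ii) are thin and force $A_a$ and $A_b$ to be small. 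This is exactly what the isodiametric inequality in an annulus, Theorem~\ref{isodiam_annulus}, provides: it bounds the $\lambdatwo$-measure of a set of diameter at most $2$ contained in $\{x:2\le\norm{x}\le d\}$ by a quantity $\Psi(d)$ with $\Psi(2)=0$ and $\Psi(d)=\pi$ for $d\ge4$. Applying it to $A_a$ and to $A_b$, and writing $L(d)$ for the (explicitly computable) area of the radius-$2$ lens with centres at distance $d$, we obtain $\lambdatwo(A)\le2\Psi(d)+L(d)$, and it remains to verify $2\Psi(d)+L(d)\le2\pi$ for all $d>2$, with equality if and only if $d\ge4$. I expect this verification — together with the proof of Theorem~\ref{isodiam_annulus}, which presumably proceeds by identifying the extremal diameter-$2$ subset of an annulus — to be the main obstacle; the closing step is a one-variable inequality.

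\emph{Equality in the vertex bound.}
If $\lambdatwo(A)=2\pi$, the estimate above forces $2\Psi(d)+L(d)=2\pi$, hence $d\ge4$; thus $L(d)=0$, so $A_{ab}$ is a $\lambdatwo$-nullset and $A=A_a\sqcup A_b$ up to a nullset, with $\lambdatwo(A_a)=\lambdatwo(A_b)=\pi$ and $\diam(A_a),\diam(A_b)\le2$. The equality case of the classical isodiametric inequality (Remark~\ref{remIsodiametric}) then shows that $A_a$ and $A_b$ are unit balls up to nullsets, and they are disjoint, hence non-overlapping, which is the claimed description. The edge equality case was reduced in the first paragraph.
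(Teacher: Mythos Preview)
Your outline is exactly the paper's strategy: reduce the edge bound to the vertex bound via Theorem~\ref{cauchy}, and for the vertex bound pick a diametral pair, cover $A$ by the two balls of radius~$2$, bound the two ``outer'' pieces by Theorem~\ref{isodiam_annulus} and the ``lens'' piece by its area, and then verify the resulting one-variable inequality. The equality analysis is also the same. Two genuine gaps remain, however.

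\textbf{Range of $d$.} Theorem~\ref{isodiam_annulus} is stated only for $R\in[2\sqrt2,4]$, so your bound $\lambdatwo(A_a),\lambdatwo(A_b)\le\Psi(d)$ is unavailable when $d\in(2,2\sqrt2)$; your case $\diam(A)\le 2$ does not cover this. The paper disposes of $d<2\sqrt2$ in one line via the classical isodiametric inequality ($\lambdatwo(A)\le\pi(d/2)^2<2\pi$), and handles $d\ge 4$ separately (the lens is a nullset, so $\lambdatwo(A)\le 2\pi$ directly). You should insert both of these before invoking the annulus bound. A related technicality: to make the $K_3$-freeness argument with $a,b$ literal, the paper first reduces to compact $A$ by inner regularity, rather than taking $a,b\in\overline A$.

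\textbf{The one-variable inequality.} You explicitly leave the verification of $2\Psi(d)+L(d)\le 2\pi$ open, and this is indeed where the work is. In the paper's notation this is $f(R)=2(h(R)+g(R))\le 2\pi$ on $[2\sqrt2,4]$, proved by showing $f$ is strictly increasing with $f(4)=2\pi$. The derivative computation is not routine: after writing $h'(R)=2R\arcsin(\ba(R)/R)+H(R)$, the paper shows by an algebraic identity that $H(R)\equiv 0$, and then uses $\arcsin(w)\ge w$ to conclude $f'(R)>0$. The strict monotonicity is also what gives the strict inequality $\lambdatwo(A)<2\pi$ for $d<4$, which you need for the equality case.
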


In other words, the large-distance  graph of an 
optimal $K_3$-free set in $\R^2$ is 
a complete, $\lambdatwo$-balanced bipartite graph. The proof of Theorem~\ref{2d_mantel} is based on the following result, which may be of independent interest.

                \NOOPxbeforeThm

\begin{theorem}[Isodiametric inequality on the annulus] 
\label{isodiam_annulus}
Let $R \in [2\sqrt{2}, 4]$ and consider the  set $D:=D(0,R)\setminus D(0,2)^o \subset \R^2$. Assume that $A$ is a measurable subset of $D$ such that $\diam(A)\leq2$. Then 
\begin{equation}\label{eq:isoAnnu}
\lambdatwo(A)\leq R^2\arcsin\left(\frac{\ba}{R}\right)-4\arcsin\left(\frac{\ba}{2}\right)+2\arccos(\ba) ,
\end{equation}
where $\ba=\sqrt{\frac{-R^4+16R^2}{8(R^2+2)}}$.
\end{theorem}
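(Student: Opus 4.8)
The plan is to reduce to rotationally symmetric competitors, then to apply the diameter constraint only along one cleverly chosen one‑parameter family of pairs of concentric circles, and finally to recognize the resulting bound as the area of an explicit extremal set. By inner regularity we may assume $A$ is compact. Replace $A$ by its circular symmetrization with respect to the positive $x$‑axis — for each $\rho$, replace $A\cap S(0,\rho)$ by the arc of $S(0,\rho)$ centred at $(\rho,0)$ with the same $\mathcal H^1$‑measure. This preserves $\lambda_2(A)$, preserves $A\subseteq D$ (both $D$ and the symmetrization centre are invariant under rotations about $0$), and does not increase $\diam(A)$ — a standard fact, e.g.\ because circular symmetrization is a limit of polarizations across lines through the origin and polarization across a hyperplane never increases the diameter (a short case analysis, using that reflecting a point to the opposite side of $x$ across a hyperplane increases its distance to $x$). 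So it suffices to bound $\lambda_2(A)$ when $A\cap S(0,\rho)=\{\rho e^{i\phi}:|\phi|\le f(\rho)\}$ for a measurable $f\colon[2,R]\to[0,\pi/2]$, in which case $\lambda_2(A)=\int_2^R 2\rho f(\rho)\,d\rho$.

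Since $\diam(A)\le 2$, the chord of each arc $A\cap S(0,\rho)$ has length $\le2$, so $f(\rho)\le\arcsin(1/\rho)\le\pi/6$; hence for any $\rho_1,\rho_2$ the two points of $A$ at angles $+f(\rho_1)$ on $S(0,\rho_1)$ and $-f(\rho_2)$ on $S(0,\rho_2)$ subtend an angle $f(\rho_1)+f(\rho_2)\le\pi/3$, and the diameter bound gives
\begin{equation*}
\rho_1^2+\rho_2^2-2\rho_1\rho_2\cos\bigl(f(\rho_1)+f(\rho_2)\bigr)\le 4\qquad\text{for all }\rho_1,\rho_2\in[2,R].\tag{$\star$}
\end{equation*}
Now set $\sigma(s):=\sqrt{R^2+4-s^2}$ and $\rho_0:=\sqrt{(R^2+4)/2}$. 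Then $\sigma$ is an involution of $[2,R]$ with the single fixed point $\rho_0$, carrying $[2,\rho_0]$ decreasingly onto $[\rho_0,R]$, and $s^2+\sigma(s)^2=R^2+4$. Substituting $\rho=\sigma(s)$ on the outer half of the integral,
\[
\lambda_2(A)=\int_2^R 2\rho f(\rho)\,d\rho=\int_2^{\rho_0}2s\bigl(f(s)+f(\sigma(s))\bigr)\,ds,
\]
and $(\star)$ applied to the pair $(s,\sigma(s))$ reads $f(s)+f(\sigma(s))\le\arccos\bigl(R^2/(2s\sigma(s))\bigr)$, the argument of $\arccos$ lying in $[-1,1]$ precisely because $R\le4$ forces $R^4\le16R^2\le4s^2(R^2+4-s^2)$ on $[2,\rho_0]$. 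Therefore
\[
\lambda_2(A)\ \le\ \int_2^{\rho_0}2s\,\arccos\!\left(\frac{R^2}{2s\sqrt{R^2+4-s^2}}\right)ds.
\]

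It then remains to evaluate this integral. Substituting $x=s^2$ and integrating by parts (using $(R^2+4)^2-R^4=8(R^2+2)$), or equivalently and more transparently computing directly the area of
\[
A^\ast:=D(p,1)\cap D,\qquad p=\bigl(\tfrac1{\sqrt2}\sqrt{R^2+2},\,0\bigr),
\]
as $\lambda_2\bigl(D(p,1)\cap D(0,R)\bigr)-\lambda_2\bigl(D(p,1)\cap D(0,2)\bigr)$ via the circular‑segment formula, one obtains — after simplification using $1-c^2=a^2$, where $c=\tfrac{R^2-4}{4d}$, $d=\tfrac1{\sqrt2}\sqrt{R^2+2}$ — exactly the right‑hand side of \eqref{eq:isoAnnu}. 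The set $A^\ast$ is itself an admissible competitor: $A^\ast\subseteq D(p,1)$ has diameter $\le2$ and lies in $D$. Moreover, for $A^\ast$ the extreme points of $\partial A^\ast$ at radii $s$ and $\sigma(s)$ lie on the circle $S(p,1)$ and are antipodal on it — this is the identity $w(s)+w(\sigma(s))=2d$, $h(s)=h(\sigma(s))$ for the upper intersection point $(w(s),h(s))$ of $S(p,1)$ with $S(0,s)$, and it is exactly what forces $\sigma(s)^2=R^2+4-s^2$ — so $(\star)$ holds with equality along the whole family and the displayed bound is attained by $A^\ast$. This proves \eqref{eq:isoAnnu}, and shows in addition that it is sharp.

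The substantive difficulties are two. First, the rigorous justification of the symmetrization step: the measurability of $f$ (Fubini) and the diameter‑monotonicity of circular symmetrization, obtained from its approximation by polarizations. Second, and more essentially, discovering the involution $\sigma$ — equivalently, the extremal set $A^\ast$; everything else is bookkeeping. One locates $A^\ast$ by optimizing over unit disks centred on the axis: a direct computation gives $\dfrac{d}{dt}\lambda_2\bigl(D((t,0),1)\cap D\bigr)=2\bigl(\sqrt{1-c_-^2}-\sqrt{1-c_+^2}\bigr)$, where $c_-,c_+$ are the cosines of the angular half‑widths at which $S((t,0),1)$ meets $S(0,2)$ and $S(0,R)$; this vanishes exactly at $t=\tfrac1{\sqrt2}\sqrt{R^2+2}$ (a maximum), and the antipodal structure of that disk then dictates $\sigma$. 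The hypothesis $R\le4$ is used crucially above ($\arccos$ must be real, and $A^\ast$ must meet the annulus in the stated way), while $R\ge2\sqrt2$ guarantees that $A^\ast$ has the generic shape touching both bounding circles of $D$ as well as the inner disk $D(0,2)$, so that the single family of constraints $(\star)$ over the pairs $(s,\sigma(s))$ is indeed the binding one.
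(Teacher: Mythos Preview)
Your approach --- circular symmetrization followed by the one--parameter family of diameter constraints paired by the involution $\sigma(s)=\sqrt{R^2+4-s^2}$ --- is elegant and identifies the extremal configuration $A^\ast=D(p,1)\cap D$ correctly. But the argument, as written, has a genuine gap.

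Circular symmetrization does not by itself guarantee that the symmetrized set meets every circle $S(0,\rho)$ for $\rho\in[2,R]$. Your description ``$A\cap S(0,\rho)=\{\rho e^{i\phi}:|\phi|\le f(\rho)\}$ for a measurable $f:[2,R]\to[0,\pi/2]$'' tacitly assumes full radial support, and the constraint $(\star)$ requires that both points $(\rho_1,f(\rho_1))\polarindex$ and $(\rho_2,-f(\rho_2))\polarindex$ actually lie in $A$. If, say, $A\cap S(0,s)=\emptyset$ while $A\cap S(0,\sigma(s))\neq\emptyset$, all you know is $f(\sigma(s))\le\arcsin(1/\sigma(s))$, and this can strictly exceed $\arccos\bigl(R^2/(2s\sigma(s))\bigr)$: at $R=4$, $s=2$ the latter equals $\arccos 1=0$. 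So the pointwise inequality you integrate is simply not available. Nor can you fix this by extending $f$ by $0$, since adjoining the axis segment $[2,R]\times\{0\}$ to $A$ may push the diameter above $2$ (e.g.\ $\bigl\|(2,0)-(4,\arcsin\tfrac14)\polarindex\bigr\|^2=20-4\sqrt{15}>4$).

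The paper closes exactly this gap before any pairing of radii: it first passes to a $D$-maximal set via Hausdorff compactness, and then proves by two separate perturbation arguments that such a maximizer contains both $(R,0)$ and $(2,0)$, after which a convexity lemma gives full radial support $[2,R]$. Once that reduction is secured, your involution argument becomes a legitimate --- and rather slick --- alternative to the paper's endgame, which instead pins down the four ``corner'' points $\X^1,\X^2,\Y^1,\Y^2$, shows they form a rectangle with centre $\BS$, and uses Steiner symmetrization across the perpendicular line through $\BS$ to conclude $A\subset D(\BS,1)$. In short: the involution idea is good and buys you a clean integral in place of the corner/rectangle analysis, but it must be run on a maximizer already known to span the full annulus, and establishing that is not free.
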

The bound~\eqref{eq:isoAnnu} in Theorem~\ref{isodiam_annulus} is optimal. An example of an optimal set is given in~Figure~\ref{fig:Annulus}. Calculations that this set attains the bound in~\eqref{eq:isoAnnu} follow from Fact~\ref{fact:areaA}.
\begin{figure}
\begin{center}
		\includegraphics[scale=0.9]{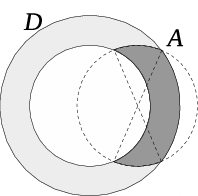}
\end{center}			
	\caption{A set $A$ showing that the bound in Theorem~\ref{isodiam_annulus} is optimal can be obtained by intersecting $D$ with a unit disk placed in such a way that its center is in the intersection of diagonals of the ``corners'' of $A$.
        As we show later, that means that the ``corners'' form a
        rectangle with vertical side length $2a$.}
	\label{fig:Annulus}
\end{figure}

\subsection{Organisation of the paper}

In Section~\ref{moonMoser} we provide a short proof of Theorem~\ref{cauchy}. 
In Section~\ref{mantel} we prove Theorem~\ref{2d_mantel}. The proof of Theorem~\ref{2d_mantel} is based upon Theorem~\ref{isodiam_annulus}, whose proof is deferred to Section~\ref{vasek}, and relies on P\'olya's  circular symmetrization, which is employed along the perimeters of co-centric circles. Our paper ends with  Section~\ref{conclusion}, in which we collect some remarks and an open problem. 

\medskip

An extended abstract describing this work was published in the proceedings of Eurocomb~2019,~\cite{DHKMPVEurocomb}.

\section{Proof of Theorem~\ref{cauchy}}\label{moonMoser}
For our proof of Theorem~\ref{cauchy}, the formalism of graphons will be convenient. We actually need only very little, so we give a self-contained introduction. We refer the reader to~\cite{Lovasz} for a thorough treatise. Suppose that $X$ is a probability measure space. A \emph{graphon} is a symmetric measurable function $W:X^2\rightarrow[0,1]$. The density of a complete graph $K_k$ in $W$ is defined as 
\begin{equation}\label{eq:densityKk}
t(K_k,W):= \int_{x_1}\cdots \int_{x_k}\prod_{1\le i<j\le k} W(x_i,x_j) \,.
\end{equation}
A graphon $W$ is said to be \emph{$K_k$-free} if $t(K_k,W)=0$. The \emph{edge density} of $W$ is $t(K_2,W)=\int_x\int_y W(x,y)$.

Our proof of Theorem~\ref{cauchy} is a rather straightforward application of the following folklore version of Tur\'an's theorem for graphons (see Corollary~16.11 in~\cite{Lovasz}).
\begin{theorem}[Tur\'an's theorem for graphons]\label{thm:TuranGraphon}
Suppose that $k\in\mathbb{N}$, $X$ is a probability measure space, and $W:X^2\rightarrow[0,1]$ is a $K_k$-free graphon. Then the edge density of $W$ is at most $1-\frac{1}{k-1}$. Moreover, if the edge density of $W$ equals $1-\frac{1}{k-1}$, then $X$ can be partitioned into sets $X_1,\ldots,X_{k-1}$ of measure $\frac{1}{k-1}$ each, and such that $W$ restricted to $X_i\times X_j$ equals to $\mathbf{1}_{i\neq j}$ (modulo a nullset) for each $i,j\in[k-1]$.
\end{theorem}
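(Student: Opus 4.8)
The plan is to derive Theorem~\ref{thm:TuranGraphon} from the finite Tur\'an theorem (Theorem~\ref{turanFund}) by means of $W$-random graphs. Put $m=k-1$ and let $W^*$ be the \emph{Tur\'an graphon} on $[0,1]$: partition $[0,1]$ into $m$ intervals $I_1,\dots,I_m$ of length $1/m$ and set $W^*(x,y)=1$ when $x$ and $y$ lie in distinct intervals and $W^*(x,y)=0$ otherwise; then $t(K_k,W^*)=0$ and the edge density of $W^*$ is $1-\tfrac1m$. First I would reduce to $X=[0,1]$ with Lebesgue measure: every graphon is weakly isomorphic to one of this form with the same homomorphism densities (see~\cite{Lovasz}), and both the hypothesis ($K_k$-freeness) and the conclusion are invariants of weak isomorphism. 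Here I would also record the reformulation that the structural conclusion holds for $W$ exactly when $\delta_{\square}(W,W^*)=0$, i.e.\ $W$ is weakly isomorphic to $W^*$: one implication is clear, and for the other the multiset of masses of the twin classes of $W$ (maximal sets of points $x,x'$ with $W(x,\cdot)=W(x',\cdot)$ a.e.) is an isomorphism invariant, which recovers a partition $X=X_1\sqcup\cdots\sqcup X_m$ with $\mu(X_i)=1/m$ and $W=\mathbf 1_{i\ne j}$ on $X_i\times X_j$ modulo a nullset.

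For the inequality I would sample. Given $n\ge k$, let $\mathbb G(n,W)$ be the $W$-random graph on vertex set $[n]$: draw $x_1,\dots,x_n$ independently from $X$ and, conditionally on these, include each edge $ij$ independently with probability $W(x_i,x_j)$. The expected number of copies of $K_k$ in $\mathbb G(n,W)$ is $\binom nk\,t(K_k,W)=0$, so $\mathbb G(n,W)$ is $K_k$-free almost surely; hence, by Theorem~\ref{turanFund}, $e(\mathbb G(n,W))\le\tfrac12(1-\tfrac1m)n^2$ almost surely. Since $\mathbb E\,e(\mathbb G(n,W))=\binom n2\,t(K_2,W)$, taking expectations gives $t(K_2,W)\le(1-\tfrac1m)\tfrac{n}{n-1}$, and letting $n\to\infty$ yields $t(K_2,W)\le 1-\tfrac1m$. (When $k=2$ this is immediate anyway: $t(K_2,W)=0$ forces $W=0$ a.e.\ and the trivial one-part structure.)

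For the equality statement, which I expect to be the real work, suppose $t(K_2,W)=1-\tfrac1m$ with $m\ge 2$. Almost surely $\delta_{\square}(\mathbb G(n,W),W)\to 0$ as $n\to\infty$ (the standard convergence of $W$-random graphs, see~\cite{Lovasz}). If $\delta_{\square}(W,W^*)>0$, then almost surely $\delta_{\square}(\mathbb G(n,W),W^*)$ stays bounded away from $0$ for large $n$, so the stability theorem of Erd\H{o}s and Simonovits supplies a fixed $\delta>0$ with $e(\mathbb G(n,W))\le(1-\tfrac1m-\delta)\binom n2$ almost surely for all large $n$. On the other hand $e(\mathbb G(n,W))\le\tfrac12(1-\tfrac1m)n^2$ always, and the former event has probability tending to $1$; taking expectations and letting $n\to\infty$ would force $1-\tfrac1m\le 1-\tfrac1m-\delta$, a contradiction. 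Hence $\delta_{\square}(W,W^*)=0$, which by the first paragraph is precisely the asserted structure.

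The main obstacle is thus the equality case: it rests on the almost-sure cut-distance convergence $\mathbb G(n,W)\to W$ and on the quantitative Erd\H{o}s--Simonovits stability theorem (together with the routine passage between edit distance and cut distance), both standard but needing to be quoted in the right form. A variant that avoids random graphs entirely is to take finite graphs $G_n\to W$ with $|V(G_n)|\to\infty$, delete $o(|V(G_n)|^2)$ edges to destroy every copy of $K_k$ (possible since $t(K_k,G_n)\to 0$, by the removal lemma), and then compare with the Tur\'an graph exactly as above; I would include this as a remark.
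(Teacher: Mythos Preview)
The paper does not give its own proof of Theorem~\ref{thm:TuranGraphon}; it simply quotes it as folklore (Corollary~16.11 in~\cite{Lovasz}) and then applies it to derive Theorem~\ref{cauchy}. So there is no argument in the paper to compare your proposal to, and your write-up goes well beyond what the authors themselves supply.

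Your approach via $W$-random graphs is a standard and sound route. The derivation of the inequality is clean: $t(K_k,W)=0$ forces $\mathbb G(n,W)$ to be $K_k$-free almost surely, Theorem~\ref{turanFund} bounds its edge count, and averaging plus $n\to\infty$ gives the density bound. For the equality case, your stability argument is correct in outline; the one place that deserves a line of justification is the passage from ``$\delta_\square(\mathbb G(n,W),W^*)$ bounded away from $0$'' to the edit-distance hypothesis of Erd\H{o}s--Simonovits. Since the cut distance is dominated by the normalised edit distance and $\delta_\square(T_{n,k-1},W^*)\to 0$, being $\varepsilon$-far from $W^*$ in cut distance indeed forces $\mathbb G(n,W)$ to be $\Omega(\varepsilon n^2)$-far from every Tur\'an graph in edit distance, so the contrapositive of stability applies as you say.

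One genuine point to tighten: your reduction to $X=[0,1]$ for the ``moreover'' part is not as automatic as you suggest. Weak isomorphism preserves all homomorphism densities, so it transports the hypothesis and the value of $t(K_2,W)$, but the conclusion is a statement about a partition of the \emph{original} space $X$, and twin-class structure need not transfer back to an arbitrary probability space without further argument (for instance, via the existence of measure-preserving maps realising the weak isomorphism, which requires mild assumptions on $X$). In the paper's application $X$ is a subset of $\mathbb R^d$ with normalised Lebesgue measure, hence standard and atomless, so this is harmless there; but if you want the theorem at the stated generality you should either add a standardness assumption on $X$ or argue the partition directly on $X$ (e.g.\ by defining $X_i$ via the degree function $x\mapsto \int W(x,\cdot)$ and the neighbourhood map, without detouring through $[0,1]$).
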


Suppose now that $A$ in Theorem~\ref{cauchy} is given. There is nothing to prove if $\lambda_d(A)=0$, so let us assume that $\lambda_d(A)>0$. Also, recall that below Problem~\ref{gen_isodiam}, we argued that $\lambda_d(A)<\infty$. So, let $X$ be a measure space whose ground set is $A$ and whose measure is the Lebesgue measure on $A$ rescaled by $\frac{1}{\lambda_d(A)}$. Let $W(x,y)$ be~1 or~0, depending on whether $xy$ forms an edge in $\mathcal{G}_A$ or not. Since $\mathcal{G}_A$ is essentially $K_k$-free, we have that $W$ is $K_k$-free. Hence, Theorem~\ref{thm:TuranGraphon} implies that the edge density of $W$ is at most $1-\frac{1}{k-1}$. Taking into account the rescaling, we get that $e(\mathcal{G}_A) \leq \frac{1}{2}\cdot( 1- \frac{1}{k-1} ) \cdot \lambda_d(A)^2$, as was needed.

Let us now turn to the moreover part of the theorem. Suppose that we have an equality in~\eqref{eq:cauchy}. Hence, the edge density of $W$ equals $1-\frac{1}{k-1}$, and hence we can partition $X$ into sets $X_1,\ldots,X_{k-1}$ as in Theorem~\ref{thm:TuranGraphon}. For each $i=1,\ldots,k-1$, let $A_i\subset X_i$ be the set of points of Lebesgue density~1 in $A_i$. We claim that for each pair $(c,d)\in A_i^2$ we have $\|c-d\|\le 2$. Indeed, if there existed a single pair $(c,d)\in A_i^2$ with $\|c-d\|>2$, then we could find a set $C\subset A_i$ (consisting only of points very close to $c$) of positive measure and a set $D\subset A_i$ (consisting only of points very close to $d$) of positive measure such that $\|c'-d'\|>2$ for each $c'\in C$ and $d'\in D$. This would imply that $W$ equals~1 on a subset of $X_i\times X_i$ of positive measure, which contradicts the assertion of Theorem~\ref{thm:TuranGraphon}. By similar reasoning, we get that if $i\neq j$ and $x\in A_i$ and $y\in A_j$, then $\|x-y\|\ge 2$. This concludes the proof of the moreover part of the theorem.

\subsection{Discussion}\label{ssec:discusion}
Recall that some extremal questions for distance graphs were considered in~\cite{ShabanovRaigorodskii}. However, there seems to be fundamental difference in extremal questions for distance graphs and for large-distance graphs. Large-distance graphs structurally behave similar to dense finite graphs, that is, graphs that have quadratically many edges in the number of vertices. This is exactly the class of graphs which is suitable for the graphon approach, and the above proof of Theorem~\ref{cauchy} demonstrates this. The same type of reductions would yield optimal versions of the Erd\H{o}s--Stone Theorem, Razborov-Reiher Clique Density Theorem, and many other, for large-distance graphs.

Distance graphs, on the other hand, correspond more to sparse finite graphs. Let us point out that some constructions of sparse finite graphs in extremal graph theory (such as~\cite{snow}) can be actually seen as finitarizations of certain distance graphs. 

\section{Proof of Theorem~\ref{2d_mantel}}\label{mantel}

In this section we prove Theorem~\ref{2d_mantel}. 
We begin by showing that $\lambdatwo(A)\le 2\pi$, where $A\subset \R^2$ is a measurable set for which $\mathcal{G}_A$ is $K_3$-free. The inner regularity of Lebesgue measure implies that it is enough to prove the result under the additional assumption that $A$ is \emph{compact}. 

The proof is based upon Theorem~\ref{isodiam_annulus}, which we assume to be true throughout this section and whose proof is deferred to the next section. 
We distinguish three cases. 

Suppose first that $A$ satisfies $\diam(A)< 2\sqrt{2}$. Then the Isodiametric inequality implies $\lambdatwo(A)< 2\pi$, as desired. 

Next assume that $\diam(A)\ge 4$. Fix two points $p,q\in A$ such that $\|p-q\|=\diam(A)$ and notice that 
$A \subset D(p,2) \cup D(q,2)$. Now observe that the set $A_p :=A\cap D(p,2)$ has diameter less than or equal to $2$; indeed, if there existed two points $x,y\in A_p$ such that $\|x-y\|>2$ then they would form together with the point $q$ a triple of points all of whose pairwise distances are larger than $2$, a contradiction to the fact that $\mathcal{G}_A$ is $K_3$-free. 
Similarly, we obtain that the set $A_q :=A\cap D(q,2)$ has diameter less than or equal to $2$. 
The Isodiametric inequality yields 
\begin{equation}\label{eq:Chester_0}
\lambdatwo(A) \le \lambdatwo(A_p) + \lambdatwo(A_q) \le 2\pi \, , 
\end{equation}
as desired. Furthermore, in case of equality, $\lambdatwo(A)=2\pi$, we must have $\lambdatwo(A_p) = \lambdatwo(A_q) =\pi$. Hence, by the characterization of equality in the Isodiametric inequality, $A_p$ and $A_q$ are two unit balls, modulo a nullset. Since $\diam(A)\ge 4$, their intersection is either empty or a singleton.

Hence we are left with the case $R:=\diam(A)\in [2\sqrt{2},4]$. Fix two points $x,y\in A$ such that 
$\|x-y\|=R$, and consider the sets 
\[ A_x = A\setminus D(x,2) \; \text{ and } \; A_y = A\setminus D(y,2) . \] 
For the same reasons as above, we have that $A\subset D(x,2) \cup D(y,2)$. Hence
\begin{equation}\label{eq:Chester}
 A \subset A_x \cup A_y \cup ( D(x,2) \cap D(y,2) ) \, .
 \end{equation}
Moreover, we have $\diam(A_x), \diam(A_y) \le 2$ as well as 
\[ A_x \subset D(x,R)  \setminus D(x,2) \; \text{ and } \; A_y \subset D(y,R)\setminus D(y,2) . \]

\bigbreak

\begin{notation}
Below, we define functions $g,\ba,h,H,f:[2\sqrt{2},4]\rightarrow\mathbb{R}$. We denote their derivatives as $g',\ba',h',H',f'$, respectively. 
\begin{align*}
g(z)&=4\arccos\left(\frac{z}{4}\right)-\frac{z}{2}\sqrt{4-\frac{z^2}{4}},\\
\ba(z)&=\sqrt{\frac{-z^4+16z^2}{8(z^2+2)}},\\
h(z)&=z^2\arcsin\left(\frac{\ba(z)}{z}\right)-4\arcsin\left(\frac{\ba(z)}{2}\right)+2\arccos(\ba(z)),\\
H(z)&=z\frac{z\ba'(z)-\ba(z)}{\sqrt{z^2-(\ba(z))^2}}-\frac{4\ba'(z)}{\sqrt{4-(\ba(z))^2}}-\frac{2\ba'(z)}{\sqrt{1-(\ba(z))^2}},\\
f(z)&=2(g(z)+h(z)).
\end{align*}
\end{notation}

\bigbreak

\begin{lemma}\label{dul}
We have $\lambdatwo(A)\leq f(R)$. 
\end{lemma}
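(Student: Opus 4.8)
The plan is to combine the covering $A\subseteq A_x\cup A_y\cup\bigl(D(x,2)\cap D(y,2)\bigr)$ recorded in~\eqref{eq:Chester} with subadditivity of Lebesgue measure, and then to estimate the three pieces separately. Concretely, I would first note that $A_x=A\setminus D(x,2)$ is measurable (it is the intersection of the compact set $A$ with an open set), and likewise $A_y$, so that
\[
\lambdatwo(A)\le\lambdatwo(A_x)+\lambdatwo(A_y)+\lambdatwo\bigl(D(x,2)\cap D(y,2)\bigr).
\]

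For the first two summands I would apply Theorem~\ref{isodiam_annulus}. Recall from the paragraph preceding the lemma that $\diam(A_x)\le2$ and $A_x\subseteq D(x,R)\setminus D(x,2)^o$, and symmetrically for $A_y$, while $R=\diam(A)\in[2\sqrt2,4]$. After translating so that the relevant centre is the origin, Theorem~\ref{isodiam_annulus} applies verbatim and yields $\lambdatwo(A_x)\le h(R)$ and $\lambdatwo(A_y)\le h(R)$: indeed $\ba(R)$ is precisely the constant $\ba$ appearing in~\eqref{eq:isoAnnu}, and the right-hand side of~\eqref{eq:isoAnnu} is by definition exactly $h(R)$.

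For the third summand I would compute the area of the lens $D(x,2)\cap D(y,2)$, the intersection of two disks of radius $2$ whose centres lie at distance $\|x-y\|=R$. The radical axis, being the perpendicular bisector of the segment joining the centres, lies at distance $R/2$ from each centre and splits the lens into two congruent circular segments, each cut from a radius-$2$ disk by a chord subtending a central half-angle $\alpha$ with $\cos\alpha=R/4$. Such a segment has area $4\alpha-4\sin\alpha\cos\alpha=4\arccos(R/4)-\tfrac{R}{4}\sqrt{16-R^2}=g(R)$, so that $\lambdatwo\bigl(D(x,2)\cap D(y,2)\bigr)=2g(R)$. Substituting the three estimates into the displayed inequality gives $\lambdatwo(A)\le 2h(R)+2g(R)=f(R)$, which is the claim.

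The main obstacle does not lie in this lemma at all: the only genuine work above is the routine lens-area identity and the verification that $A_x,A_y$ meet the hypotheses of Theorem~\ref{isodiam_annulus}, and the latter was already arranged in the paragraph above. The real difficulty — the sharp isodiametric bound on the annulus — has been deliberately isolated into Theorem~\ref{isodiam_annulus}, whose proof (via P\'olya's circular symmetrization) is the hard part and is carried out separately in Section~\ref{vasek}.
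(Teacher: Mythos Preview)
Your argument is correct and follows exactly the paper's approach: use the covering~\eqref{eq:Chester} together with subadditivity, bound each of $\lambdatwo(A_x)$ and $\lambdatwo(A_y)$ by $h(R)$ via Theorem~\ref{isodiam_annulus}, and compute the lens area $\lambdatwo(D(x,2)\cap D(y,2))=2g(R)$. Your write-up in fact supplies more detail than the paper (which simply asserts the lens identity and the applicability of Theorem~\ref{isodiam_annulus}); there is nothing to add.
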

\begin{proof}
It is not difficult to see that 
\[ \lambdatwo(D(x,2) \cap D(y,2)) = 2g(R) . \] 
Moreover, by Theorem~\ref{isodiam_annulus} we have 
\[ \lambdatwo(A_x) \le h(R) \; \text{ and } \; \lambdatwo(A_y)\le h(R) . \]
Plugging this into~\eqref{eq:Chester}, we get 
\[
\lambdatwo(A) = \lambdatwo(A_x) + \lambdatwo(A_y) + \lambdatwo(D(x,2) \cap D(y,2))  \le f(R) \, , 
\]
as required. 
\qed\end{proof}

Clearly, we have $f(4)=2\pi$ and Lemma~\ref{dul} implies that it is enough to show that the function 
$f(\cdot)$ is increasing on the interval $[2\sqrt{2}, 4]$. 
Now, straightforward calculations provide the following expressions for the derivatives of the functions under consideration: 
\begin{align}
\notag
g'(z)&=-\frac12\sqrt{16-z^2}\, ,\\
\label{eq:aderiva}
\frac{\ba'(z)}{\ba(z)}&=\frac{2z}{2\ba(z)^2}\,\frac{-z^4-4z^2+32}{8(z^2+2)^2}
                      =\frac{(z^2-4)(z^2+8)}{z(z^2+2)(z^2-16)} \, ,\\
\notag
\ba'(z)&=-\frac{(z^2-4)(z^2+8)}{(2(z^2+2))^{\frac32}\sqrt{16-z^2}} \, ,\\
\notag
h'(z)&=2\arcsin\left(\frac{\ba(z)}{z}\right)z+H(z) \, .
\end{align}

Next, we want to express $H(z)$. To this end, write
\begin{align*}
Q &= 8(z^2+2)\,,\\
        S_1
       &= z^2 - \ba(z)^2
        = \frac{ 8z^4 + 16 z^2 + z^4 -16z^2}{8(z^2+2)}
        = \frac{ 9 z^4 }{ Q }
        \, ,
       \\
        S_2
       &= 4 - \ba(z)^2
        = \frac{ (z^2+8)^2 }{ Q }
        \, ,
       \\
        S_3
       &= 1 - \ba(z)^2
        = \frac{ (z^2-4)^2 }{ Q }
        \, .
\end{align*}
Furthermore, let $R_i = \sqrt{ S_i \, Q \, }$, for $i=1,2,3$. Then
\[
 R_1 = 3z^2,
 \qquad
 R_2 = z^2+8\, ,
 \qquad
 R_3 = z^2-4\, .
\]
Now
\begin{align*}
    \frac{ H(z) }{ \sqrt Q }
    &=
    \left( \frac{ z^2 }{ R_1 } - \frac{ 4 }{ R_2 } - \frac{ 2 }{ R_3 } \right) \ba'(z)
    -
    \frac z {R_1} \cdot \ba(z) \\
    &=
    \frac{ \ba(z) }{ 3 z } \left(
                           \frac{ \ba'(z) }{ \ba(z) }\cdot \,\frac{ z (z^2+2)(z^2-16) }{ (z^2-4)(z^2+8) }
                           - 1
                        \right)
                        \, .
\end{align*}
Using~\eqref{eq:aderiva}, we obtain
\begin{equation}\label{eq:Hul}
H(z)=0 \, .
\end{equation}
Observe that on $z\in[2\sqrt{2},4]$, we have $0\le \frac{\ba(z)}z\le 1$. Using this together with $\arcsin(w)\ge w$ for $w\in[0,1]$, we obtain
\begin{equation}\label{eq:SSNZ}
    \arcsin\left(\frac{\ba(z)}{z}\right)z\ge \ba(z)\;.
\end{equation}
We are now ready to express $f'(z)$:
\begin{align*}
f'(z)=2(g'(z)+h'(z))&\overset{\eqref{eq:Hul}}{=} 4\arcsin\left(\frac{\ba(z)}{z}\right)z-\sqrt{16-z^2} \\ 
&\overset{\eqref{eq:SSNZ}}{\ge} 4\ba(z)-\sqrt{16-z^2}\\ 
&= z\sqrt{\frac{2(16-z^2)}{z^2+2}}-\sqrt{16-z^2}\\
&=\sqrt{\frac{16-z^2}{z^2+2}}(\sqrt{2}z-\sqrt{z^2+2}) \, .
\end{align*}
It follows that $f'(z)>0$
whenever
$z<4$ and $\sqrt{2}z-\sqrt{z^2+2}>0$, which holds true for $z\in(2\sqrt{2},4)$. As $f$ is clearly continuous on the interval $[2\sqrt{2},4]$, the first bound in the main part of Theorem~\ref{2d_mantel} follows. The second bound follows from Theorem~\ref{cauchy}. 

Let us quickly comment on the ``moreover'' part of Theorem~\ref{2d_mantel}.  
We showed that $f$ is strictly increasing on the interval $[2\sqrt{2},4]$; hence $\lambda_2(A) < 2\pi$, when $\diam(A)<4$. 
If $\diam(A)\ge 4$ then  
\eqref{eq:Chester_0} implies that  $\lambdatwo(A)\le 2\pi$. The isodiametric inequality yields the first statement of the ``moreover'' part. The second statement follows from the ``moreover'' part of Theorem~\ref{cauchy}.

\section{Proof of Theorem~\ref{isodiam_annulus}}\label{vasek}

This section is devoted to the proof of Theorem~\ref{isodiam_annulus}, i.e., we prove the isodiametric inequality in the annulus. Recall that $D$ denotes the annulus $D(0,R)\setminus D(0,2)^o$, where $R \in [2\sqrt{2}, 4]$ is fixed. 
We write $(r,\alpha)\polarindex$ to denote the element of $\R^2$ represented by the polar coordinates  $(r,\alpha)$.
That is, $(r,\alpha)\polarindex = (r \cos \alpha, r \sin \alpha)$.

The idea behind the proof is to find a ``well behaved'' and ``maximal'' subset of $D$ whose diameter is less than or equal to $2$, and then compute its Lebesgue measure. Let us begin by clarifying the meaning of the term ``maximal''.
                \NOOPxbeforeThm

\begin{definition}\label{def:MandMax}
Set  $\M=\{A\subset D: \ A\text{ is compact and } \diam(A)\leq2\}$. We say that $M\in\M$ is $D$-maximal if $\lambdatwo(M)=\sup\{\lambdatwo(A);\ A\in\M\}$.
\end{definition}

Our first result shows that $D$-maximal sets do exist. Throughout this section, $\conv(\cdot)$ denotes convex hull.
                \NOOPxbeforeThm

\begin{lemma}\label{l:max}
There exists a $D$-maximal set. 
\end{lemma}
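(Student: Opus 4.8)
The plan is to prove existence of a $D$-maximal set by a standard compactness argument using the Blaschke selection theorem, since $\M$ is a family of compact subsets of the fixed compact annulus $D$ and Lebesgue measure behaves well under Hausdorff convergence when the diameter constraint is preserved.

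First I would note that $\M$ is nonempty (e.g. a single point, or a small disk sitting inside the annulus, belongs to $\M$) and that $s:=\sup\{\lambdatwo(A):A\in\M\}$ is finite, since every $A\in\M$ is contained in the bounded set $D$, so $s\le\lambdatwo(D)<\infty$. Pick a maximizing sequence $(A_n)_{n}$ in $\M$ with $\lambdatwo(A_n)\to s$. Since all $A_n$ lie in the compact set $D$, the Blaschke selection theorem gives a subsequence (not relabelled) converging in the Hausdorff metric to some nonempty compact set $M\subset D$. The diameter is continuous with respect to Hausdorff convergence, so $\diam(M)=\lim_n\diam(A_n)\le 2$, hence $M\in\M$.

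It remains to check that $\lambdatwo(M)\ge s$, i.e. that $M$ is $D$-maximal. The subtlety is that Lebesgue measure is only upper semicontinuous under Hausdorff convergence of compact sets (a sequence of finite sets can converge to a ball), so I cannot simply pass $\lambdatwo(A_n)\to\lambdatwo(M)$; I need a lower bound on $\lambdatwo(M)$. The key observation is that if $A_n\to M$ in the Hausdorff metric, then for every $\varepsilon>0$ we have $A_n\subset M^{(\varepsilon)}$ (the open $\varepsilon$-neighbourhood of $M$) for all large $n$; hence $\lambdatwo(A_n)\le\lambdatwo(M^{(\varepsilon)})$, and letting $n\to\infty$ first and then $\varepsilon\to 0$, using $\lambdatwo(M^{(\varepsilon)})\downarrow\lambdatwo(\overline{M})=\lambdatwo(M)$ (valid since $M$ is compact, so $\bigcap_{\varepsilon>0}M^{(\varepsilon)}=M$ and the measures of the nested neighbourhoods decrease to $\lambdatwo(M)$), we get $s=\lim_n\lambdatwo(A_n)\le\lambdatwo(M)$. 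Combined with $\lambdatwo(M)\le s$ from $M\in\M$, this yields $\lambdatwo(M)=s$, so $M$ is $D$-maximal.

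The main obstacle here is precisely this upper-semicontinuity issue: one must resist the temptation to claim $\lambdatwo$ is continuous along the sequence, and instead argue via shrinking neighbourhoods of the limit. An alternative route, which I would mention but not pursue in detail, is to invoke the fact that $D$-maximal sets can be taken convex (any $A\in\M$ is contained in $\conv(A)$, which still has diameter $\le 2$ but need not lie in $D$ — so this does not immediately work for the annulus and one really does need the neighbourhood argument above); thus the neighbourhood argument is the clean way to go. Everything else is routine invocation of Blaschke selection and continuity of $\diam$.
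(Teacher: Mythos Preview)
Your proof is correct and follows essentially the same approach as the paper: take a maximizing sequence, extract a Hausdorff-convergent subsequence, and use that for large $n$ the terms lie in small open neighbourhoods of the limit to deduce $\lambdatwo(M)\ge s$ (the paper phrases this last step via outer regularity, choosing an open $G\supset M$ with $\lambdatwo(G)\le\lambdatwo(M)+\varepsilon$, which is the same idea as your shrinking $\varepsilon$-neighbourhoods). One terminological quibble: what you invoke is the compactness of the hyperspace of nonempty compact subsets of a compact metric space rather than the classical Blaschke selection theorem (which is usually stated for convex bodies), but the content is right.
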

\begin{proof}
For every positive integer $n$, there exists $K_n\in\M$ such that 
\[ \lambdatwo(K_n)>\sup\{\lambdatwo(A);\ A\in\M\}-\tfrac1n \, . \] 
Since $\M$ endowed with the
Hausdorff metric is a compact space, we can find a  convergent subsequence $\{K_{n_k}\}_{k\in \N}$.

Let $K= \lim_{k\to\infty}K_{n_k}$.
                Clearly, $K\in\M$.
                For every $\varepsilon> 0$ there exists an open set $G\supset K$
                such that
                $\lambdatwo(G)\le \lambdatwo(K) + \varepsilon$.
                For all $k$ sufficiently large, we have $K_{n_k} \subset G$, so that
                $\lambdatwo(K_{n_k}) - \varepsilon \le \lambdatwo(K)$.
                Since $\varepsilon>0$ was arbitrary, $\lambdatwo(K) \ge \sup\{\lambdatwo(A);\ A\in\M\}$
                and $K$ is a $D$-maximal set.
\qed\end{proof}

To prove Theorem~\ref{isodiam_annulus}, we wish to show that the inequality~\eqref{eq:isoAnnu} holds true
for every measurable subset of $D$ with diameter at most $2$. Since the closure of a set preserves its diameter, it clearly suffices to show that~\eqref{eq:isoAnnu} holds true for every $A \in \M$. By Lemma~\ref{l:max}, it is enough to show that~\eqref{eq:isoAnnu} holds true
for every $D$-maximal set $A\in \M$.
We will restrict our attention to such sets~$A$.
                \NOOPxbeforeThm

\begin{assumption}\label{St_a}
     $A\in \M$ is a $D$-maximal set.
\end{assumption}

Since $\diam(A) \le 2$
(and therefore the same is true for the ``radial projection'' of $A$ on $S(0,2)$) we may assume without loss of generality that 
\begin{equation}\label{polar_hypothesis} 
        \alpha \in (-\pi/2, \pi/2)
        \qquad
        \text{ for every } (r,\alpha)\polarindex \in A
        . 
\end{equation}

Next we recall the notion of circular symmetrization, which ``symmetrizes'' a given set along the perimeters of balls of fixed radius. This notion is due to P\'olya (see~\cite{Polya} or \cite[p. 77]{Bonnesen_Fenchel}). 
Actually, the following definition is slightly adjusted to our needs as we will symmetrize compact subsets of an annulus.
                \NOOPxbeforeThm

\begin{definition}[Circular symmetrization]
Let $A\subset D$ be compact and $r\in[2,R]$. Set
$A_{\circ r}=A \cap S(0,r)=\{x\in A;\ \|x\|=r\}$.
Define
\[
  A^{st}
  =
  \left\{
  (r,\alpha)\polarindex\in\R^2
  :\ r\in[2,R],\ A_{\circ r}\neq \emptyset \text{ and } |\alpha|\leq\frac{ \Ha^{1}(A_{\circ r})}{2r}
  \right\}
  \, , 
\]
where $\Ha^{1}(A_{\circ r})$ denotes the $1$-dimensional Hausdorff measure (or, the length) of $A_{\circ r}$.
Note that $\Ha^{1} ( A_{\circ r} ) = r \, \lambdaone ( \{ \alpha \in [0, 2\pi) \setsep  (r,\alpha)\polarindex\in  A \})$.
\end{definition}

In other words, the circular symmetrization replaces the set $A_{\circ r}$ with an arc, having the same $1$-dimensional Hausdorff measure as $A_{\circ r}$, that is centered on the $x$-axis.  
Our aim is to show that we may assume that the set $A$ is symmetric. Before doing so, we first show that the circular symmetrization of a compact set is compact.
                \NOOPxbeforeThm

\begin{lemma}\label{COMPACTNESS}
If $K\subset\ D$ is compact, then so is $K^{st}$.
\end{lemma}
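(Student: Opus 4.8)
The goal is to show that circular symmetrization preserves compactness: if $K\subset D$ is compact, then $K^{st}$ is compact. Since $K^{st}\subset D$ is clearly bounded (it sits inside the annulus), it suffices to prove that $K^{st}$ is closed. The only subtle point is the behaviour of the ``arc-length profile'' function
\[
\ell(r) := \Ha^1(K_{\circ r}) = r\cdot\lambdaone\bigl(\{\alpha\in[0,2\pi)\setsep (r,\alpha)\polarindex\in K\}\bigr)
\]
as $r$ varies over $[2,R]$. First I would translate the question into these coordinates: a point $(r,\alpha)\polarindex$ lies in $K^{st}$ exactly when $r\in[2,R]$, $K_{\circ r}\neq\emptyset$, and $|\alpha|\le \ell(r)/(2r)$. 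So I want to understand the set $\{r\setsep K_{\circ r}\neq\emptyset\}$ and the function $\ell$.

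\textbf{Key steps.} First, note that $\{r\in[2,R]\setsep K_{\circ r}\neq\emptyset\}$ is exactly the image of $K$ under the continuous map $x\mapsto\|x\|$, hence it is compact. Second, and this is the crux, I would prove that $\ell$ is \emph{upper semicontinuous} on $[2,R]$: if $r_n\to r$ then $\limsup_n \ell(r_n)\le \ell(r)$. The natural tool here is to work with the angular ``slices'': let $\Phi(r)=\{\alpha\in[-\pi/2,\pi/2]\setsep (r,\alpha)\polarindex\in K\}$ (using the normalization \eqref{polar_hypothesis}, or on $[0,2\pi)$ in general), so $\ell(r)=r\cdot\lambdaone(\Phi(r))$. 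Because $K$ is closed, the set-valued map $r\mapsto\Phi(r)$ has closed graph, and a standard argument (e.g.\ via Fatou's lemma applied to $\mathbf 1_{\{(r,\alpha)\polarindex\in K\}}$, or via the outer-regularity characterization used in the proof of Lemma~\ref{l:max}: cover $\Phi(r)$ by a slightly larger open set of angles, which by closedness of $K$ must contain $\Phi(r_n)$ for all large $n$) gives $\limsup_n\lambdaone(\Phi(r_n))\le\lambdaone(\Phi(r))$. Multiplying by the continuous factor $r$ yields upper semicontinuity of $\ell$. Third, I would combine these: take a sequence $(r_n,\alpha_n)\polarindex\in K^{st}$ converging to some $(r_*,\alpha_*)\polarindex$; then $r_n\to r_*$, $\alpha_n\to\alpha_*$, $r_*\in[2,R]$, and $K_{\circ r_*}\neq\emptyset$ (since $\{r\setsep K_{\circ r}\neq\emptyset\}$ is closed), while $|\alpha_n|\le\ell(r_n)/(2r_n)$ passes to the limit using upper semicontinuity of $\ell$ and continuity of $r\mapsto 1/(2r)$ to give $|\alpha_*|\le\ell(r_*)/(2r_*)$. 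Hence $(r_*,\alpha_*)\polarindex\in K^{st}$, so $K^{st}$ is closed, and being bounded, it is compact.

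\textbf{Main obstacle.} The delicate step is the upper semicontinuity of the angular-measure function $r\mapsto\lambdaone(\Phi(r))$; lower semicontinuity can fail (mass can appear in the limit slice that is not present nearby, e.g.\ $K$ could pinch to a single circle), but that direction is not needed. One must be a little careful that ``$\Phi$ has closed graph'' is genuinely used — the outer-regularity trick from Lemma~\ref{l:max} adapts cleanly: given $\varepsilon>0$, pick an open $U\supset\Phi(r_*)$ in the angular variable with $\lambdaone(U)\le\lambdaone(\Phi(r_*))+\varepsilon$; by compactness of $K$, the ``tube'' $\{(r,\alpha)\polarindex\setsep |r-r_*|<\delta,\ \alpha\in U\}$ contains $K_{\circ r_n}$ for all large $n$ (otherwise a subsequence of points outside the tube would converge into $K$ but outside $\Phi(r_*)$), giving $\lambdaone(\Phi(r_n))\le\lambdaone(U)\le\lambdaone(\Phi(r_*))+\varepsilon$. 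Everything else is routine.
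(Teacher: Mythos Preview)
Your proposal is correct and follows essentially the same route as the paper: sequential closedness, reducing to upper semicontinuity of $r\mapsto\Ha^1(K_{\circ r})$, and proving the latter via the outer-regularity trick (cover the angular slice at the limit radius by a slightly larger open set and use compactness of $K$ to force nearby slices inside it). The only cosmetic difference is that you phrase non-emptiness of $K_{\circ r_*}$ via the compact image of $x\mapsto\|x\|$, whereas the paper argues it directly from compactness.
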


\begin{proof}
Suppose that $\left((r_n,\alpha_n)\polarindex\in K^{st}\right)_n$ is a sequence converging to some $(r,\alpha)\polarindex$. We need to show that $(r,\alpha)\polarindex\in K^{st}$. For every $t>0$, we set $K_{\circ t}=K\cap S(0,t)$ and $C_t=\{\alpha\in(-\pi,\pi]\colon(t,\alpha)\polarindex\in K_{\circ t}\}\subset\mathbb{R}$. As $K_{\circ r_n}\neq\emptyset$ for every $n$, compactness of $K$ implies that $K_{\circ r}\neq\emptyset$ as well. So it clearly suffices to show that $\mathcal H^1(K_{\circ r})\ge\limsup_{n\rightarrow\infty}\mathcal H^1(K_{\circ r_n})$. Fix $\varepsilon>0$ and find an open set $G\subset\R$ such that $C_r\subset G$ and $\mathcal H^1(G)<\mathcal H^1(C_r)+\varepsilon$. 

We claim that there is a natural number $n_0$ such that for every $n\ge n_0$ we have $C_{r_n}\subset G$. Indeed, otherwise, there exists an infinite sequence $n_1<n_2<\ldots$ and for each $k\in\mathbb{N}$ we can find a $\beta_{n_k}\in C_{r_{n_k}}\setminus G$. By passing to a subsequence, we can assume that $(\beta_{n_k})_k$ is convergent. Let $\beta:=\lim_k \beta_{n_k}$. Since $G$ is open and $\beta_{n_k}\notin G$, we have $\beta\notin G$. But then the sequence $\left((r_{n_k},\beta_{n_k})\polarindex\right)_k$ converges to $(r,\beta)\polarindex\notin K$, which contradicts our assumption that $K$ is compact. Hence the existence of $n_0$ as above follows.

For every $n\ge n_0$ we have $\mathcal H^1(C_{r_n})\le\mathcal H^1(G)<\mathcal H^1(C_r)+\varepsilon$. As $\varepsilon>0$ was arbitrary, we conclude that $\mathcal H^1(C_r)\ge\limsup_{n\rightarrow\infty}\mathcal H^1(C_{r_n})$, and so
\begin{equation*}
\mathcal H^1(K_{\circ r})=r\mathcal H^1(C_r)\ge r\limsup_{n\rightarrow\infty}\mathcal H^1(C_{r_n})=\limsup_{n\rightarrow\infty}r_n\mathcal H^1(C_{r_n})=\limsup_{n\rightarrow\infty}\mathcal H^1(K_{\circ r_n})\,,
\end{equation*}
as was needed.
\qed\end{proof}

\begin{lemma}\label{STPOL}
If $A$ is $D$-maximal then $A^{st}$ is also $D$-maximal.
\end{lemma}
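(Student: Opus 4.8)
The plan is to show that circular symmetrization preserves membership in $\M$ and does not decrease $\lambda_2$, and then invoke $D$-maximality of $A$ to conclude that $A^{st}$ attains the supremum as well. Since $A$ is $D$-maximal we already know $\lambda_2(A) = \sup\{\lambda_2(B) : B \in \M\}$; by Fubini (or Cavalieri) applied in polar coordinates, $\lambda_2(A^{st}) = \int_2^R \Ha^1(A_{\circ r})\,\frac{\mathrm{d}r}{1} \cdot \text{(appropriate normalization)} = \lambda_2(A)$, because circular symmetrization replaces each circular slice $A_{\circ r}$ by an arc of the same length. So the only real content is to verify that $A^{st} \in \M$, i.e. that $A^{st}$ is a compact subset of $D$ with $\diam(A^{st}) \le 2$. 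Compactness is Lemma~\ref{COMPACTNESS}, and $A^{st} \subset D$ is immediate since the radial coordinate is untouched and ranges over $[2,R]$.

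The substantive step is therefore the diameter bound: I must show $\diam(A^{st}) \le 2$. The standard argument is to take two points $(r,\alpha)\polarindex$ and $(s,\beta)\polarindex$ of $A^{st}$ with, say, $r \le s$, and produce two points of $A$ on the circles $S(0,r)$ and $S(0,s)$ whose distance is at least the distance of the chosen symmetrized points. The key geometric fact is that the squared distance between $(r,\alpha)\polarindex$ and $(s,\beta)\polarindex$ equals $r^2 + s^2 - 2rs\cos(\alpha - \beta)$, which is an \emph{increasing} function of $|\alpha - \beta|$ on $[0,\pi]$. After symmetrization the slice on $S(0,r)$ is a centered arc of angular half-width $\theta_r := \Ha^1(A_{\circ r})/(2r)$ and likewise $\theta_s$ on $S(0,s)$; the maximal angular separation of a point of the first symmetrized arc from a point of the second is $\theta_r + \theta_s$ (assuming this does not exceed $\pi$, which will follow from~\eqref{polar_hypothesis}). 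On the other hand, inside $A$ one can find a point on $S(0,r)$ at polar angle $-\theta_r$ (a limit of points of $A_{\circ r}$ realizing the left endpoint, using compactness) and a point on $S(0,s)$ at polar angle $+\theta_s$; but here I need the sharper combinatorial observation that $A_{\circ r}$ and $A_{\circ s}$, as subsets of the circle, must contain points whose angular separation is at least $\theta_r + \theta_s$. This is a one-dimensional measure fact: if $C \subset (-\pi/2,\pi/2)$ and $C' \subset (-\pi/2,\pi/2)$ are compact with $\lambda_1(C) = 2\theta_r$, $\lambda_1(C') = 2\theta_s$, then $\sup_{c \in C, c' \in C'} |c - c'| \ge \theta_r + \theta_s$ — indeed $\sup C - \inf C \ge \lambda_1(C) = 2\theta_r$ so $\sup C \ge \theta_r$ or $\inf C \le -\theta_r$, and similarly for $C'$, and one checks the four sign cases; in the worst case one gets separation $\ge \theta_r + \theta_s$ by choosing endpoints on opposite sides, or using that both sets sit inside an interval of length $< \pi$.

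Putting these together: for the chosen points of $A^{st}$,
\[
\|(r,\alpha)\polarindex - (s,\beta)\polarindex\|^2 = r^2 + s^2 - 2rs\cos(\alpha-\beta) \le r^2 + s^2 - 2rs\cos(\theta_r+\theta_s),
\]
while the pair of points of $A$ just constructed has squared distance $r^2 + s^2 - 2rs\cos(\delta)$ with $\delta \ge \theta_r + \theta_s$, hence $\ge r^2 + s^2 - 2rs\cos(\theta_r+\theta_s)$ as well — wait, the monotonicity gives the inequality the right way only if $\delta$ is the larger angle, so the clean statement is: there exist $a,b \in A$ with $\|a-b\| \ge \|(r,\alpha)\polarindex - (s,\beta)\polarindex\|$, and since $\diam(A) \le 2$ we get $\|(r,\alpha)\polarindex - (s,\beta)\polarindex\| \le 2$. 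As the two symmetrized points were arbitrary, $\diam(A^{st}) \le 2$, so $A^{st} \in \M$. Combined with $\lambda_2(A^{st}) = \lambda_2(A) = \sup\{\lambda_2(B):B\in\M\}$, this shows $A^{st}$ is $D$-maximal.

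I expect the main obstacle to be the careful one-dimensional lemma about angular separation of the two circular slices, and in particular making sure the relevant angles stay in $[0,\pi]$ so that the cosine-monotonicity argument applies cleanly; the hypothesis~\eqref{polar_hypothesis} (all angles in $(-\pi/2,\pi/2)$, so $\theta_r, \theta_s < \pi/2$ and $\theta_r + \theta_s < \pi$) is exactly what is needed to push this through, and it should be invoked explicitly. The Fubini computation for the measure is routine and can be stated in one line.
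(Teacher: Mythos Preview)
Your proposal is correct and follows the same route as the paper: Fubini for measure preservation, Lemma~\ref{COMPACTNESS} for compactness, and for the diameter bound the observation that on any two circular slices of $A$ one can find points whose angular separation is at least $\theta_r+\theta_s\ge|\alpha-\beta|$, then monotonicity of $\|(r,\gamma)\polarindex-(t,0)\polarindex\|$ in $|\gamma|\in[0,\pi)$. The paper organizes the separation step slightly more cleanly by comparing the midpoints $\alpha_1+\alpha_2$ and $\beta_1+\beta_2$ of the two angular ranges to decide which cross-pair to take (your ``four sign cases'' do not close directly in the case $\sup C\ge\theta_r$, $\sup C'\ge\theta_s$), but your parenthetical ``choose endpoints on opposite sides'' is exactly this idea: since $(\alpha_1-\beta_2)+(\beta_1-\alpha_2)=(\alpha_1-\alpha_2)+(\beta_1-\beta_2)\ge 2\theta_r+2\theta_s$, one of the two cross-differences is $\ge\theta_r+\theta_s$.
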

\begin{proof}
Clearly, $A^{st}\subset D$, and it is compact by Lemma~\ref{COMPACTNESS}. Moreover, Fubini's theorem yields $\lambdatwo(A)=\lambdatwo(A^{st})$ and therefore it only remains to show that $\diam(A^{st})\le 2$. To this end, we need to prove that for every 
$(r,\alpha), (t,\beta) \in [2,R] \times (-\pi,\pi]$
such that
$A_{\circ r}\neq\emptyset$,
$A_{\circ t}\neq\emptyset$,
\(
      \abs\alpha\leq\frac{ \Ha^{1}(A_{\circ r})}{2r}
\)
and
\(
      \abs\beta\leq\frac{ \Ha^{1}(A_{\circ t})}{2t}
\), we have $\|(r,\alpha)\polarindex-(t,\beta)\polarindex\|\le 2$.
Note that there exist $\alpha_1>\alpha_2$ and $\beta_1>\beta_2$ such that $(r,\alpha_i)\polarindex,(t,\beta_i)\polarindex\in A$, for $i\in\{1,2\}$, and 
\begin{equation}\label{eq:Cjb}
\alpha_1-\alpha_2\geq\frac{\Ha^{1}(A_{\circ r})}{r}\geq2|\alpha| \; \text{ and } \; \beta_1-\beta_2\geq\frac{\Ha^{1}(A_{\circ t})}{t}\geq2|\beta| \, .    
\end{equation}
We distinguish two cases. 
Assume first that $\alpha_1+\alpha_2\geq\beta_1+\beta_2$. Then we have 
\begin{equation}\label{eq:a1-b2}
|\alpha_1-\beta_2|\geq\left|\alpha_1-\frac{\alpha_1+\alpha_2}{2}\right|+\left|\frac{\beta_1+\beta_2}{2}-\beta_2\right|\overset{\eqref{eq:Cjb}}{\geq}|\alpha|+|\beta|\geq|\alpha-\beta|  
\, .
\end{equation}
Recall that $\alpha,\beta,\alpha_i,\beta_i\in\left(-\frac{\pi}{2},\frac{\pi}{2}\right)$ by~\eqref{polar_hypothesis}.
It is easy to see that $\norm{ (r,\gamma)\polarindex - (t, 0)\polarindex }$ is an increasing function of $\gamma \in [0, \pi)$.
Therefore~\eqref{eq:a1-b2}
implies that
\begin{align*}
           \norm{ (r,\alpha_1)\polarindex-(t,\beta_2)\polarindex }
      &=    \norm{ (r, \alpha_1-\beta_2)\polarindex- (t,0)\polarindex }
      \geq \norm{ (r,\alpha-\beta)\polarindex-(t,0)\polarindex }\\
      &=    \norm{ (r,\alpha)\polarindex-(t,\beta)\polarindex }\, .
\end{align*}
Hence $\norm{ (r,\alpha)\polarindex-(t,\beta)\polarindex } \le \diam A \le 2$, as required.
If $\alpha_1+\alpha_2 < \beta_1+\beta_2$ then we proceed in the same way, the only difference being that we select $\alpha_2,\beta_1$, instead of $\alpha_1,\beta_2$, and obtain the same result. 
\qed\end{proof}

Our next lemma says that we can find a $D$-maximal set which, in addition to being symmetric, also contains point $(R,0)$.

\begin{lemma}\label{R0inA}
There exists a $D$-maximal set $A$ such that $A=A^{st}$ and $(R,0)\in A$.
\end{lemma}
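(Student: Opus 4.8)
The plan is to produce the required set by two operations applied to an arbitrary $D$-maximal set: a rigid translation along the $x$-axis that pushes the set outward until it meets the outer circle $S(0,R)$, followed by a circular symmetrization handled by Lemma~\ref{STPOL}.

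First I would fix, using Lemma~\ref{l:max} together with a rotation, a $D$-maximal set $A_1$ satisfying the normalization~\eqref{polar_hypothesis}; in particular every point of $A_1$ has strictly positive first coordinate. For $t\ge 0$ set $A_1^t:=A_1+(t,0)$. Since translation is an isometry, $\diam(A_1^t)=\diam(A_1)\le 2$ and $\lambdatwo(A_1^t)=\lambdatwo(A_1)$. Writing a point of $A_1$ as $(x,y)$ with $x>0$, we have $\|(x+t,y)\|^2=\|(x,y)\|^2+2xt+t^2\ge\|(x,y)\|^2\ge 4$, so $A_1^t$ stays disjoint from $D(0,2)^o$ and remains in the open right half-plane, hence keeps satisfying~\eqref{polar_hypothesis}. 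The function $\psi(t):=\max_{p\in A_1}\|p+(t,0)\|$ is continuous and strictly increasing on $[0,\infty)$ (each $\|p+(t,0)\|$ is strictly increasing in $t$ since $p$ has positive first coordinate), with $\psi(0)=\max_{p\in A_1}\|p\|\le R$ (as $A_1\subset D(0,R)$) and $\psi(t)\to\infty$; hence there is a unique $t^\ast\ge 0$ with $\psi(t^\ast)=R$. Put $A_2:=A_1^{t^\ast}$. Then $A_2$ is compact, contained in $D(0,R)\setminus D(0,2)^o=D$, has diameter at most $2$, and $\lambdatwo(A_2)=\lambdatwo(A_1)$; therefore $A_2$ is again $D$-maximal, it still satisfies~\eqref{polar_hypothesis}, and $A_2\cap S(0,R)\neq\emptyset$.

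It remains to symmetrize. Since $A_2$ is a $D$-maximal set satisfying~\eqref{polar_hypothesis}, Lemma~\ref{STPOL} shows that $A:=A_2^{st}$ is $D$-maximal. Unwinding the definition of circular symmetrization shows that the operation $(\cdot)^{st}$ is idempotent (at each radius $r$ the symmetrized slice has the same length $\Ha^1$ and is again an arc centered at angle $0$), so $A=A^{st}$. Finally, since $(A_2)_{\circ R}=A_2\cap S(0,R)\neq\emptyset$, the definition of $A_2^{st}$ forces $(R,0)\in A_2^{st}=A$, the condition $|0|\le\Ha^1((A_2)_{\circ R})/(2R)$ being trivially satisfied. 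Thus $A$ is a $D$-maximal set with $A=A^{st}$ and $(R,0)\in A$, as required. I expect the only genuinely delicate point to be verifying that the rightward translation keeps the set inside the annulus $D$ — that it never re-enters $D(0,2)^o$ — which is precisely where the normalization~\eqref{polar_hypothesis}, confining the set to the open half-plane through the origin, is used.
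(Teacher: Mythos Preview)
Your proof is correct and follows essentially the same strategy as the paper: produce the required set from an arbitrary $D$-maximal set by a rightward translation along the $x$-axis (so that the set touches $S(0,R)$) combined with circular symmetrization (via Lemma~\ref{STPOL}). The only cosmetic difference is the order of operations --- the paper symmetrizes first (so the outermost point already sits on the $x$-axis and the translation amount is explicit), then translates, then symmetrizes again, whereas you translate first (locating $t^\ast$ by the intermediate value theorem) and symmetrize once; your version is marginally more economical and your explicit verification that the translate stays in $D$ is a point the paper leaves implicit.
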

\begin{proof}
Let $M$ be a $D$-maximal set and let $t=\min\{v\in\R;\ M\subset D(0,v)\}$. Notice that $(t,0)\in M^{st}$. 
Define the set  
\[ A=(M^{st}+ (R-t,0))^{st} \, ,  \]
where $M^{st}+x$ denotes the set $M^{st}$ shifted by vector $x$.
By construction, and using Lemmata~\ref{COMPACTNESS} and \ref{STPOL}, we deduce $A=A^{st}$ and $A\in\M$. Moreover, the $D$-maximality of $M$ implies 
that $A$ is $D$-maximal.
Also, $(R,0) \in A$ as desired. 
\qed\end{proof}

Hence, for the remaining part of this section, we may assume that the set  $A$ satisfies the properties given by Lemma~\ref{R0inA}.
                \NOOPxbeforeThm

\begin{assumption}\label{St_a_2}
The set $A$ satisfies $A=A^{st}$ and $(R,0)\in A$. 
\end{assumption}

Note that the assumption implies that $A\subset D((R,0), 2)$.
                \NOOPxbeforeThm

\begin{lemma}
The point $(2,0)$ belongs to $A$.
\end{lemma}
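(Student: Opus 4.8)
The plan is to show that the point $(2,0)$ must belong to the $D$-maximal, circularly symmetric set $A$ by contradiction: if $(2,0)\notin A$, we can enlarge $A$ near the inner circle $S(0,2)$ without violating the diameter constraint, contradicting maximality. First I would use the structure imposed by Assumptions~\ref{St_a} and \ref{St_a_2}: since $A=A^{st}$, for each $r\in[2,R]$ the slice $A_{\circ r}$ is an arc $\{(r,\alpha)\polarindex:|\alpha|\le\varphi(r)\}$ centered on the positive $x$-axis, where $\varphi(r)=\Ha^1(A_{\circ r})/(2r)\ge 0$, and $A\subset D((R,0),2)$. In particular, a point $(2,\alpha)\polarindex$ is admissible (i.e.\ can be added to $A$ keeping $\diam\le 2$) precisely when $\|(2,\alpha)\polarindex-(r,\beta)\polarindex\|\le 2$ for all $(r,\beta)\polarindex\in A$; since $\norm{(r,\gamma)\polarindex-(2,0)\polarindex}$ is increasing in $|\gamma|$ and $A$ is symmetric, the binding constraint comes from the extreme points of the slices, and one checks that the set of admissible angles at radius $2$ is itself a symmetric interval around $0$ that is nonempty (it contains at least a neighbourhood of the angles already used by $A_{\circ 2}$, and in fact, because $(R,0)\in A$ forces points of $A$ to lie in the lens $D((R,0),2)$, the whole arc of $S(0,2)$ inside that lens is admissible).

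Next I would argue that $A_{\circ 2}$, hence the arc at radius $2$, must already be maximal among admissible arcs — otherwise adding the missing sub-arc of positive length near $S(0,2)$ strictly increases $\lambdatwo$ while keeping compactness (a closed arc union $A$ is compact) and keeping the diameter at most $2$ (by the admissibility just described, every added point is within distance $2$ of all of $A$, and the added points are mutually within distance $2$ since they lie on an arc of $S(0,2)$ of angular width at most that of the lens, which has diameter $2$). This would contradict $D$-maximality of $A$. Finally, the maximal admissible arc at radius $2$ contains $(2,0)$: indeed $(2,0)$ is the point of $S(0,2)$ closest to every point of $D$ in the relevant angular range, so $\norm{(2,0)\polarindex-(r,\beta)\polarindex}\le\norm{(R,0)\polarindex-(r,\beta)\polarindex}\le 2$ for every $(r,\beta)\polarindex\in A$ (using $r\le R$ and $|\beta|$ small, together with monotonicity of the distance to a fixed exterior point), so $(2,0)$ is admissible; since the maximal admissible arc is symmetric about the $x$-axis and nonempty, it contains $0$, and thus $(2,0)\in A$ after the enlargement — but $A$ was already maximal, so $(2,0)\in A$ to begin with.

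The main obstacle I expect is the careful verification that the set of admissible angles at radius $2$ is a symmetric interval containing $0$, and that enlarging the inner slice to fill this interval does not create a pair of points in $A$ at distance $>2$. The subtlety is that adding points at radius $2$ could in principle conflict with far-away points of $A$ at larger radii and opposite angular sign; this is where one needs the observation that $A\subset D((R,0),2)$ (Assumption~\ref{St_a_2}) pins $A$ into a lens, so that $(2,0)$, being the nearest point of the inner circle to $(R,0)$, is comfortably within distance $2$ of everything, and by the monotonicity of $\gamma\mapsto\norm{(r,\gamma)\polarindex-(t,0)\polarindex}$ (already used in the proof of Lemma~\ref{STPOL}) the same holds for a whole symmetric neighbourhood of angle $0$ at radius $2$. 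Once this geometric fact is pinned down, the contradiction with maximality is immediate and the proof concludes.
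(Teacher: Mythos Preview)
Your argument has a genuine gap at the key step: the claim that $(2,0)$ is within distance $2$ of every point of $A$ does not follow from $A\subset D((R,0),2)$ alone. The inequality you assert, $\norm{(2,0)-(r,\beta)\polarindex}\le \norm{(R,0)-(r,\beta)\polarindex}$, is false in general; for instance, when $R=4$ the point $p=(4,\theta)\polarindex$ with $2R\sin(\theta/2)=2$ lies on $S((R,0),2)\cap S(0,R)$ and satisfies $\norm{(2,0)-p}=\sqrt{6}>2$. Nothing in your argument excludes such points from $A$, so ``admissibility'' of $(2,0)$ is unproven. (There is a second, smaller issue: adding a sub-arc of $S(0,2)$ has $\lambdatwo$-measure zero, so to reach a contradiction you would in any case need a two-dimensional patch, not just an arc.)

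The paper handles exactly this difficulty by a case split. If $A\subset D((R,0),2)^o$, a small translation toward the origin (after fattening near $(R,0)$) enlarges $A$ inside $D$. Otherwise some $(x_1,\pm x_2)\in A\cap S((R,0),2)$, and a triangle-inequality argument along segments through the $x$-axis forces the stronger containment $A\subset D((R-2,0),2)$; now one can legitimately adjoin the center $(R-2,0)$ and take $\conv(A\cup\{(R-2,0)\})\cap D$, which contains a $D$-neighbourhood of $(2,0)$ and hence has strictly larger measure. Your plan is missing precisely this intermediate step that upgrades $A\subset D((R,0),2)$ to $A\subset D((R-2,0),2)$.
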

\begin{proof}
Assume, towards a contradiction, that $(2,0) \notin A$ and notice that the assumption $A=A^{st}$ implies $A\cap S(0,2)=\emptyset$. We distinguish two cases.

Assume first that $A\subset D((R,0),2)^o$.
Then there exists $\varepsilon>0$ such that 
\[
\dist\bigl(A,S(0,2)\cup S((R,0),2)\bigr)>\varepsilon \, . 
\]
Consider the set $\TA=\bigl(A\cup D((R,0),\varepsilon)\bigr)-(\varepsilon,0)$. Clearly, $\lambdatwo(\TA)>\lambdatwo(A)$ and $\TA\in\M$. This implies that $A$ is not $D$-maximal, which contradicts Running assumption~\ref{St_a}.

Now, assume $A\cap S((R,0),2)\neq\emptyset$ and fix $(x_1,x_2)\in A\cap S((R,0),2)$. Recall that our Running assumption~\ref{St_a_2} implies
that
\begin{equation}\label{eq:bothpm}
  (x_1,\pm x_2)\in A\cap S((R,0),2)
  .
\end{equation}

For a while, let us assume there exists a point $(y_1,y_2)\in A\setminus D((R-2,0),2)$.
Because of the symmetry in~\eqref{eq:bothpm}, we can assume $y_2 x_2 \ge 0$.        
Since $(y_1,y_2)\in A\subset D((R,0),2)$, it follows that there exists a real number $r\in[R-2,R]$ such that $(r,0)\in\conv(\{(y_1,y_2),(x_1,-x_2)\})$. The triangle inequality, gives   
\begin{align}
\begin{split}\label{eq:BorisVian}
        \norm{ (y_1,y_2)-(x_1,-x_2) } &= \norm{ (y_1,y_2)-(r,0) } + \norm{ (r,0)-(x_1,-x_2) }
\\ 
&\geq 
        \norm{ (y_1,y_2)-(R-2,0) }
        -(r - (R-2))
        +
\\
&\phantom{{}\geq{}}
        +\norm{ (R,0)-(x_1,-x_2) }
        -(R - r).
\end{split}
\end{align}
We have $\norm{ (y_1,y_2)-(R-2,0) }>2$ since $(y_1,y_2)\notin D((R-2,0),2)$. Further, we have $\norm{ (R,0)-(x_1,-x_2) }=2$ since $(x_1,x_2)\in S((R,0),2)$. 
Plugging this into~\eqref{eq:BorisVian}, we get $\norm{ (y_1,y_2)-(x_1,-x_2) }>2$ which contradicts the fact that $\diam(A)\le 2$.
So there is no such point $(y_1, y_2)$,
meaning that $A\subset D((R-2,0),2)$.
Since $\diam(A) \leq 2$ we obtain $\diam\bigl(A\cup\{(R-2,0)\}\bigr)\leq2$.
Now consider the set  
\[ \TA=\conv(A\cup\{(R-2,0)\})\cap D \, . \] 
Clearly,
$\TA\in\M$.
We have also $\lambdatwo(\TA)>\lambdatwo(A)$
because $\TA \setminus A$ contains the point $(2,0)$ and its small neighborhood (relative to $D$).
Hence $A$ is not $D$-maximal, contrary to Running assumption~\ref{St_a}. 
The result follows.  
\qed\end{proof}

\begin{notation}\label{not_1}
The following quantities will remain fixed for the remaining part of this section. 
\begin{align*}
\boldsymbol{\alpha}&=\max\{\gamma\in(-\pi/2, \pi/2);\ (R,\gamma)\polarindex\in A\}\\
\boldsymbol{\beta}&=\max\{\gamma\in(-\pi/2, \pi/2);\ (2,\gamma)\polarindex\in A\}\\
\X^1&=(\x^1_1,\x^1_2)=(R,\boldsymbol{\alpha})\polarindex\\ \X^2&=(\x^2_1,\x^2_2)=(R,-\boldsymbol{\alpha})\polarindex\\
\Y^1&=(\y^1_1,\y^1_2)=(2,\boldsymbol{\beta})\polarindex\\ \Y^2&=(\y^2_1,\y^2_2)=(2,-\boldsymbol{\beta})\polarindex
\end{align*}
Finally, let $\BS=(\s,0)$ be the point of intersection of the line segments $\conv\{\X^1,\Y^2\}$ and $\conv\{\X^2,\Y^1\}$.
\end{notation}
\bigbreak

\begin{lemma}\label{conv}
There exists a convex set $T$ such that $A=T\setminus D(0,2)^o$.
\end{lemma}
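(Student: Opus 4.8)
\textbf{Proof plan for Lemma~\ref{conv}.}
The goal is to show that the $D$-maximal symmetric set $A$ (satisfying Running assumptions~\ref{St_a} and~\ref{St_a_2}, and now known to contain both $(R,0)$ and $(2,0)$) is exactly the intersection with the annulus of some convex set; equivalently, that $A$ is ``radially convex outward'' in the sense that $A=\conv(A)\setminus D(0,2)^o$. The plan is to prove the two inclusions: $A\subset \conv(A)\setminus D(0,2)^o$ is immediate since $A\subset D$ and $D\cap D(0,2)^o=\emptyset$, so the content is the reverse inclusion $\conv(A)\setminus D(0,2)^o\subset A$. For this I would argue by maximality: if some point $p\in\conv(A)\setminus D(0,2)^o$ were not in $A$, then $\TA:=\conv(A)\cap D$ would be a strictly larger element of $\M$, contradicting $D$-maximality of $A$ --- \emph{provided} one checks that $\TA\in\M$, i.e.\ that $\TA$ is compact (clear, being the intersection of two compact convex-ish sets, using that $\conv(A)$ is compact as $A$ is) and, crucially, that $\diam(\TA)\le 2$. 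The diameter bound is the only real obstacle.

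To control $\diam(\TA)=\diam(\conv(A)\cap D)$, the natural first reduction is $\diam(\conv(A)\cap D)\le\diam(\conv(A))=\diam(A)\le 2$, which would finish the proof immediately --- but $\diam(\conv(A))=\diam(A)$ is a standard fact (the diameter of a bounded set equals the diameter of its convex hull, since the maximal distance is attained at extreme points). So in fact the argument is short: set $\TA=\conv(A)\cap D$; then $\TA$ is compact, $\TA\subset D$, and $\diam(\TA)\le\diam(\conv(A))=\diam(A)\le 2$, hence $\TA\in\M$; since $A\subset\TA$ we get $\lambdatwo(\TA)\ge\lambdatwo(A)$, and by $D$-maximality of $A$ we must have $\lambdatwo(\TA\setminus A)=0$. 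This last step only gives equality up to a nullset, which is not enough for the stated set-theoretic identity, so here one must use compactness and maximality more carefully: if $p\in\TA\setminus A$ then, $A$ being compact, $p$ has positive distance from $A$, so a whole relatively open neighbourhood of $p$ in $D$ lies in $\TA\setminus A$ (using that $p\in\conv(A)\setminus D(0,2)^o$ and $\conv(A)$ has nonempty interior, which holds because $A$ contains the non-collinear-in-general configuration around $(R,0)$ and $(2,0)$ --- or simply because $\lambdatwo(A)>0$, which follows from $D$-maximality and the existence of at least one nondegenerate admissible set). That neighbourhood has positive $\lambda_2$-measure, forcing $\lambdatwo(\TA)>\lambdatwo(A)$, the desired contradiction. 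Therefore $\TA\setminus A=\emptyset$, i.e.\ $\conv(A)\cap D=A$, and taking $T=\conv(A)$ proves the lemma.

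The step I expect to be the main obstacle --- or at least the one needing the most care --- is verifying $\TA\in\M$, specifically spelling out why $\TA$ is compact and why its diameter is at most $2$; the diameter claim rests on the elementary but easy-to-take-for-granted fact $\diam(\conv(A))=\diam(A)$, and I would either cite it or give the one-line extreme-point argument. The passage from ``$\lambdatwo(\TA)=\lambdatwo(A)$'' to the exact equality $\TA=A$ also deserves an explicit sentence invoking compactness of $A$ and the fact that $\lambdatwo(A)>0$, so that any point of $\TA\setminus A$ drags along a positive-measure relative neighbourhood. Once these points are nailed down, the choice $T=\conv(A)$ gives a convex $T$ with $A=T\setminus D(0,2)^o$, as required.
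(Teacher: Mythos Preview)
Your proposal is correct and follows essentially the same route as the paper: take $T=\conv(A)$, note that $\TA:=\conv(A)\setminus D(0,2)^o\in\M$ because $\diam(\conv(A))=\diam(A)\le 2$, and then use $D$-maximality to force $\TA=A$. The only difference is in the final step, where the paper invokes Running assumption~\ref{St_a_2} (the symmetry $A=A^{st}$) to see that $\TA\setminus A\neq\emptyset$ implies $\lambdatwo(\TA\setminus A)>0$, while you use compactness of $A$ together with $\conv(A)$ having nonempty interior; both arguments work. One small wording fix: you should say that a small ball around $p$ \emph{intersected with $\TA$} has positive measure and is disjoint from $A$, rather than that a full relative neighbourhood of $p$ in $D$ lies in $\TA\setminus A$, since $p$ may sit on $\partial\,\conv(A)$.
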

\begin{proof}
Since the diameter does not increase upon taking convex hulls, 
this immediately follows from the fact that $\TA := \conv(A)\setminus D(0,2)^o\in\M$.
Indeed, the assumption that $A$ is $D$-maximal implies that $\lambdatwo(\TA)=\lambdatwo(A)$.
It remains to observe that the set $\TA\setminus A$ is empty. Indeed, assuming that it is non empty, one can easily use Running assumption~\ref{St_a_2} to conclude that $\lambdatwo(\TA\setminus A)>0$, a contradiction.
\qed\end{proof}

The next two lemmata  are concerned with the distances 
between the points defined in Notation~\ref{not_1}.
                \NOOPxbeforeThm

\begin{lemma}\label{dlouha}
We have 
$\|\X^1-\Y^2\|=\|\X^2-\Y^1\|=2$.
\end{lemma}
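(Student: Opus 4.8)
The plan is to show that the points $\X^1, \X^2, \Y^1, \Y^2$ are forced by maximality to realize the diameter in the ``cross'' configuration, and that this forces the two cross-distances to equal $2$. First I would record the basic geometry: by Assumption~\ref{St_a_2} the set $A$ is symmetric about the $x$-axis, and $(R,0),(2,0)\in A$, with $\boldsymbol{\alpha},\boldsymbol{\beta}\in[0,\pi/2)$ the maximal angular extents of $A$ along the circles $S(0,R)$ and $S(0,2)$ respectively. Since $\X^1=(R,\boldsymbol{\alpha})\polarindex$ and $\Y^2=(2,-\boldsymbol{\beta})\polarindex$ both lie in $A$ and $\diam(A)\le 2$, we immediately get $\|\X^1-\Y^2\|\le 2$, and by symmetry $\|\X^2-\Y^1\|=\|\X^1-\Y^2\|\le 2$. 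So the whole content is the reverse inequality $\|\X^1-\Y^2\|\ge 2$.

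I would argue this by contradiction: suppose $\|\X^1-\Y^2\|<2$. Then by continuity of the map $(\gamma,\delta)\mapsto\|(R,\gamma)\polarindex-(2,-\delta)\polarindex\|$, and using Lemma~\ref{conv} (so that $A$ is $T\setminus D(0,2)^o$ for a convex $T$), one can enlarge $A$ near the ``corners'' $\X^1,\X^2$ (equivalently near $\Y^1,\Y^2$) while keeping the diameter at most $2$. Concretely, a small circular arc can be appended to $A$ just beyond angle $\boldsymbol{\alpha}$ on $S(0,R)$ — the only constraints that could be violated are distances to the far side of $A$, namely to $\Y^2=(2,-\boldsymbol{\beta})\polarindex$ (the constraint $\X^1$–$\X^2$ is about two points separated by $2\boldsymbol{\alpha}<\pi$ on a circle of radius $R\le 4$, which after a moment's thought, together with $R/2 \le 2$ so $\|\X^1 - \X^2\| \le 2\sin\boldsymbol{\alpha}\cdot R/2$... hmm, more carefully: $\X^1$ and $\X^2$ both lie in $D((2,0)\polarindex,2)$-type reasoning, or one simply notes the binding constraint is always the antipodal-in-angle point $\Y^2$). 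Since that distance is strictly below $2$, a positive-measure enlargement is possible, contradicting $D$-maximality (Running assumption~\ref{St_a}).

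There is a subtlety I would need to handle: after symmetrizing, the extremal point on $S(0,R)$ might actually be constrained not by $\Y^2$ but by the reflected corner $\Y^1$ or even by some interior point of $A$ on an intermediate circle. To deal with this cleanly I would invoke the monotonicity fact already used in the proof of Lemma~\ref{STPOL}, namely that $\|(r,\gamma)\polarindex-(t,0)\polarindex\|$ is increasing in $\gamma\in[0,\pi)$; combined with the symmetry of $A$, this shows that among all points of $A$ the one maximizing distance to $\X^1$ is the point of $A$ on $S(0,2)$ with the most negative angle, i.e.\ $\Y^2$ (and the binding constraint for the radial direction is similar). This reduces the whole question to the single inequality $\|\X^1-\Y^2\|\ge 2$, and its failure permits the enlargement described above.

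The main obstacle I anticipate is making the ``enlargement'' rigorous: one must produce an explicit $\TA\in\M$ with $\lambdatwo(\TA)>\lambdatwo(A)$, checking that \emph{every} pair of points of $\TA$ stays within distance $2$. The safe route is the one already used repeatedly in this section — take $\TA=\conv(A\cup\{p\})\cap D$ for a suitable point $p=(R,\gamma)\polarindex$ with $\gamma$ slightly larger than $\boldsymbol{\alpha}$ — and verify $\diam(\TA)\le 2$ using: (i) $\diam(\conv(A\cup\{p\}))=\max(\diam A,\max_{a\in A}\|p-a\|)$ for convex hulls of a set with one added point, and (ii) the monotonicity lemma to identify $\max_{a\in A}\|p-a\|$ as $\|p-\Y^2\|$, which is $<2$ for $\gamma$ close enough to $\boldsymbol{\alpha}$ by continuity and the assumed strict inequality $\|\X^1-\Y^2\|<2$. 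Symmetrizing back (Lemma~\ref{STPOL}) if necessary keeps us in the class of $D$-maximal sets, and the strict measure increase is immediate since $\TA\setminus A$ contains a non-degenerate region. This contradiction yields $\|\X^1-\Y^2\|=2$, and symmetry gives the other equality.
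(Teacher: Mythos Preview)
Your approach is broadly right in spirit --- contradiction plus enlargement --- but the key step (ii), identifying $\max_{a\in A}\|p-a\|$ as $\|p-\Y^2\|$, is not justified, and this is precisely where the substance of the lemma lies. The angular monotonicity you cite shows only that on each circle $S(0,t)$ the farthest point of $A\cap S(0,t)$ from $\X^1$ is the one with the most negative angle. It says nothing about which radius $t\in[2,R]$ gives the overall maximum, and there is no analogous monotonicity ``in the radial direction'': writing $(t,-\theta(t))\polarindex$ for the lower boundary point of $A$ on $S(0,t)$, the squared distance $\|\X^1-(t,-\theta(t))\polarindex\|^2=R^2+t^2-2Rt\cos(\boldsymbol{\alpha}+\theta(t))$ depends on $\theta(t)$, an unknown function of $t$ at this stage of the argument. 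So it is entirely possible, for all you have shown, that some $\V^2\in A$ on an intermediate circle already satisfies $\|\X^1-\V^2\|=2$ while $\|\X^1-\Y^2\|<2$; in that case your enlargement at $\X^1$ immediately violates the diameter bound, and the symmetric enlargement near $\Y^2$ may fail for the symmetric reason (some $\V^1\in A$ with $\|\Y^2-\V^1\|=2$). Your parenthetical ``the binding constraint for the radial direction is similar'' is exactly the hand-wave that hides the difficulty.

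The paper resolves this by splitting into three cases. If $A\subset D(\X^1,2)^o$, one enlarges near $\X^1$ (your argument, essentially). If $A\subset D(\Y^2,2)^o$, one enlarges near $\Y^2$. The remaining case is exactly the obstruction above: there exist $\V^2\in A\cap S(\X^1,2)$ and $\V^1\in A\cap S(\Y^2,2)$, and here no enlargement is attempted. Instead, after arranging $v^1_2\ge 0$ and $v^2_2\le 0$ and using Lemma~\ref{conv} to place the first coordinates of $\V^1,\V^2$ in $[\y^2_1,\x^1_1]$, one checks that the segments $\conv(\{\V^1,\V^2\})$ and $\conv(\{\X^1,\Y^2\})$ cross at some point $Z$; the triangle inequality through $Z$ then gives
\(
\|\V^1-\V^2\|\ge\|\V^1-\Y^2\|+\|\X^1-\V^2\|-\|\X^1-\Y^2\|>2+2-2=2,
\)
contradicting $\diam(A)\le 2$. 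This crossing-segments argument is the missing idea in your proposal.
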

\begin{proof}
As $\norm{ \X^1-\Y^2 } = \norm{ \X^2-\Y^1 }$ we only need to prove that  $\norm{ \X^1-\Y^2 } = 2$.
Assume, towards a contradiction,  that 
\begin{equation}\label{eq:stravenky}
    \|\X^1-\Y^2\|<2\;.
\end{equation} There are three cases to consider.

First, assume that $A\subset D(\X^1,2)^o$. Then there exists $\epsilon>0$ such that $A\subset D(\X^1,2-\epsilon)^o$. Set $\TA=(A\cup D(\X^1,\epsilon))\cap D$. Obviously, $\TA\in\M$. Furthermore, we claim that $\lambdatwo(\TA)>\lambdatwo(A)$. Indeed, observe first that $S(0,R)\cap \TA$ has bigger length than $S(0,R)\cap A$ by the maximality of the angle $\alpha$. Now, since both $\TA$ and $A$ are closed, we get that $\lambdatwo(\TA)>\lambdatwo(A)$. So, $A$ is not $D$-maximal, which contradicts Running assumption~\ref{St_a}.

Now, assume that $A\subset D(\Y^2,2)^o$. Then there exists $\epsilon>0$ such that $A\subset D(\Y^2,2-\epsilon)^o$. Set $\TA=(A\cup D(\Y^2,\epsilon))\cap D$. By the same arguments as above,  $\TA\in\M$ and $\lambdatwo(\TA)>\lambdatwo(A)$. This implies that $A$ is not $D$-maximal, and contradicts Running assumption~\ref{St_a}.

It remains to consider the case where there exist points $\V^1=(v_1^1,v_2^1)\in A\cap S(\Y^2,2)$ and $\V^2=(v^2_1,v^2_2)\in A\cap S(\X^1,2)$. We may assume further that $v^1_2\geq0$ and $v^2_2\leq0$; indeed, if $v^1_2<0$ then $\|(v_1^1,-v_2^1)-\Y^2\|\geq\|(v_1^1,v_2^1)-\Y^2\|=2$ and $(v_1^1,-v_2^1)\in A$. Thus we can choose the point $\V^1=(v_1^1,-v_2^1)$ instead. Similarly for $v^2_2>0$. 
Using~\eqref{eq:stravenky}, we get $\|\V^1-\Y^2\|=2>\|\X^1-\Y^2\|$.

From Lemma~\ref{conv} we easily see that $\y^2_1 \le \x^1_1$ and
that the real numbers $v^1_1, v^2_1$ belong to the interval $[\y^2_1,\x^1_1]$.
Since $S(\Y^2,2) \cap ([\y^2_1,\x^1_1] \times (0,\infty))$
is the graph of a decreasing function, we have $v^1_2>\x^1_2$. 
In a similar way, we get $v^2_2<\y^2_2$. 
Therefore,
the line segments $\conv(\{\V^1,\V^2\})$ and $\conv(\{\X^1,\Y^2\})$ intersect in a point, say, $Z$. The triangle inequality now yields 
\begin{align*}
         \norm { \V^1 - \V^2 }
 &=
         \norm { \V^1 - Z }
         +
         \norm { Z - \V^2 }
 \\
 &\geq
        (
         \norm { \V^1-\Y^2 }
         -
         \norm{ \Y^2 - Z }
        )
         +
        (
         \norm{ \X^1-\V^2 }
         -
         \norm{ Z - \X^1 }
        )
 \\
 &=
         \norm { \V^1-\Y^2 }
         +
         \norm{ \X^1-\V^2 }
         -
         \norm{ \Y^2 - \X^1 }
 >
        2 + 2 - 2 = 2\, , 
\end{align*}
contrary to the fact that $\diam(A)\le 2$. The result follows. 
\qed\end{proof}
\bigbreak

\begin{lemma}\label{stejna}
We have
$\|\X^1-\X^2\|=\|\Y^1-\Y^2\|$.
\end{lemma}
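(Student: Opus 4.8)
The first observation is that the claim concerns the ``heights'' of the corner points. Since $\X^1=(\x^1_1,\x^1_2)$ and $\X^2=(\x^1_1,-\x^1_2)$ are mirror images in the $x$-axis, $\norm{\X^1-\X^2}=2\x^1_2$, and likewise $\norm{\Y^1-\Y^2}=2\y^1_2$; here $\x^1_2=R\sin\boldsymbol{\alpha}\ge 0$ and $\y^1_2=2\sin\boldsymbol{\beta}\ge 0$, and moreover $\x^1_2,\y^1_2\in[0,1]$ because $2\x^1_2=\norm{\X^1-\X^2}\le\diam(A)\le 2$ and similarly for $\y^1_2$. So it suffices to prove that $\x^1_2=\y^1_2$. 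Since $\X^1\X^2$ and $\Y^1\Y^2$ are both perpendicular to the $x$-axis, $\X^1\Y^1\Y^2\X^2$ is an isosceles trapezoid, and the claim is that this trapezoid is a rectangle; by Lemma~\ref{dlouha} its two diagonals $\X^1\Y^2$ and $\X^2\Y^1$ already have length exactly $2$, which is where the diameter constraint is tight.

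I would prove $\x^1_2=\y^1_2$ by contradiction with the $D$-maximality of $A$ (Running assumption~\ref{St_a}). Assume $\x^1_2\ne\y^1_2$; the two possibilities $\x^1_2>\y^1_2$ and $\x^1_2<\y^1_2$ are symmetric (interchange the circles $S(0,R)$ and $S(0,2)$ in the argument below), so assume $\x^1_2>\y^1_2$. The idea is to transfer a small amount of angular mass from the outer circle to the inner one: for small $\delta,\varepsilon>0$, shrink the boundary arc of $A$ on $S(0,R)$ from half-angle $\boldsymbol{\alpha}$ to $\boldsymbol{\alpha}-\delta$ and enlarge the boundary arc on $S(0,2)$ from half-angle $\boldsymbol{\beta}$ to $\boldsymbol{\beta}+\varepsilon$, and let $\TA$ be the largest compact set that has diameter at most $2$, is convex in the annulus in the sense of Lemma~\ref{conv}, and contains these two new arcs. (This is precisely the shape of the $D$-maximal set $A$ itself, by Lemma~\ref{conv} and the enlargement argument used for Lemma~\ref{dlouha}.) The goal is to choose $\delta,\varepsilon$ so that $\TA\in\M$, $\TA\subset D$ and $\lambdatwo(\TA)>\lambdatwo(A)$, contradicting Running assumption~\ref{St_a}.

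Two things must be checked. \emph{That $\diam(\TA)\le 2$.} The boundary of such a $\TA$ consists of two circular arcs (on $S(0,R)$ and $S(0,2)$), two ``free'' arcs lying on circles $S(q,2)$ with $q$ a corner of $\TA$, and a chord; points on the free arcs are automatically within distance $2$ of all of $\TA$ (since $\TA\subseteq D(q,2)$), and for two points lying on the circular arcs the distance is maximised at the corner points. So it is enough that $\norm{\X^1-\X^2}$, $\norm{\Y^1-\Y^2}$ and $\norm{\X^1-\Y^2}\,(=\norm{\X^2-\Y^1})$ all stay at most $2$. The first is $2\x^1_2$ and decreases as $\boldsymbol{\alpha}$ decreases, the second is $2\y^1_2<2$ and stays below $2$ for $\varepsilon$ small, and for the third one computes, at the current configuration, $\frac{\partial}{\partial\boldsymbol{\alpha}}\norm{(R,\boldsymbol{\alpha})\polarindex-\Y^2}^2=\frac{\partial}{\partial\boldsymbol{\beta}}\norm{\X^1-(2,-\boldsymbol{\beta})\polarindex}^2=2(\x^1_1\y^1_2+\x^1_2\y^1_1)>0$; hence decreasing $\boldsymbol{\alpha}$ by $\delta$ and increasing $\boldsymbol{\beta}$ by $\varepsilon$ with $\varepsilon\le\delta$ keeps $\norm{\X^1-\Y^2}^2\le 4$ to first order, and the second-order error is absorbed by taking $\delta$ a little larger than $\varepsilon$ (or by a slightly sublinear perturbation). \emph{That $\lambdatwo(\TA)>\lambdatwo(A)$.} This is the crux, and the only place the hypothesis $\x^1_2>\y^1_2$ enters. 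The difficulty is that $A$ is pinched to a single point (``a corner'') at $\X^1$ and at $\Y^1$, so the area lost near $S(0,R)$ and the area gained near $S(0,2)$ are not areas of thin circular slivers and cannot be compared by a naive estimate; I would instead differentiate $\lambdatwo\bigl(\TA(\delta,\varepsilon)\bigr)$ along the admissible direction $\varepsilon=\delta$, using the explicit presentation of $\TA$ as an intersection of discs $D(q,2)$ (centred at the endpoints of the new arcs) with the annulus $D$, and verify that this derivative is strictly positive exactly when $\x^1_2>\y^1_2$. This contradicts Running assumption~\ref{St_a}; the reverse inequality $\x^1_2<\y^1_2$ is ruled out by the symmetric transfer (enlarging the outer arc, shrinking the inner one). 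Therefore $\x^1_2=\y^1_2$, i.e. $\norm{\X^1-\X^2}=\norm{\Y^1-\Y^2}$.
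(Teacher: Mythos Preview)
Your proposal has a genuine gap at exactly the point you flag as ``the crux'': you never carry out the area comparison. Saying you ``would differentiate $\lambdatwo(\TA(\delta,\varepsilon))$ \dots\ and verify that this derivative is strictly positive exactly when $\x^1_2>\y^1_2$'' is not a proof; it is a promise. The difficulty you yourself identify is real: because $A$ meets $S(0,R)$ and $S(0,2)$ in arcs that terminate in \emph{corners} at $\X^i$ and $\Y^i$, shrinking $\boldsymbol\alpha$ and enlarging $\boldsymbol\beta$ does not produce thin circular slivers whose areas are easy to compare. To make your route work you would need an explicit description of the free boundary of $\TA(\delta,\varepsilon)$ (portions of $S(\X^{2,\mathrm{new}},2)$ and $S(\Y^{2,\mathrm{new}},2)$, together with their meeting point), and then a genuine computation of $\frac{d}{d\delta}\lambdatwo(\TA(\delta,\delta))\big|_{\delta=0}$. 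You also implicitly assume, via the parenthetical ``this is precisely the shape of the $D$-maximal set $A$ itself,'' a structural description of $A$ that has not been established at this stage of the paper.

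The paper avoids all of this with a much cleaner variation: instead of perturbing the angular extent of the two arcs, it \emph{translates} the whole set along the $x$-axis. Concretely, one first shows that every point $V\in D(\Y^1,2)\cap D(\Y^2,2)\setminus D(0,R)^o$ satisfies $A\subset D(V,2)$, so that $\hat A:=\conv(A)\cup\bigl(D(\Y^1,2)\cap D(\Y^2,2)\setminus D(0,R)^o\bigr)$ still has diameter at most $2$; hence $(\hat A+(\tau,0))\cap D\in\M$ for every $\tau$. Now the area change under a shift by $\pm\delta$ is elementary to read off: to first order one gains $\norm{\Y^1-\Y^2}\,\delta$ at the inner boundary and loses $\norm{\X^1-\X^2}\,\delta$ at the outer boundary (or vice versa for the opposite shift), so any strict inequality between $\norm{\X^1-\X^2}$ and $\norm{\Y^1-\Y^2}$ immediately contradicts $D$-maximality. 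This sidesteps the corner geometry entirely and requires no structural hypotheses on $A$ beyond what has already been proved.
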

\begin{proof}
Consider any point $V
        =(v_1,v_2)
\in(D(\Y^1,2)\cap D(\Y^2,2))\setminus D(0,R)^o$.
Note that $v_2\in [\x^2_2,\x^1_2]$ and $v_1 \geq \x^1_1$.
We claim that $A\subset D(V,2)$. To see the claim, suppose that
there exists $T
        =(t_1,t_2)
\in A\setminus D(V,2)$.
We can use Lemma~\ref{conv} in the same way as before to deduce that
$t_1\in [\y^2_1,\x^1_1]$.
Also note that $\Y^1, \Y^2 \in D(V,2)$ while $T\notin D(V,2)$.
We have either $\conv(\{\X^1,T\})\cap\conv(\{\Y^2,V\})\neq\emptyset$ or $\conv(\{\X^2,T\})\cap\conv(\{\Y^1,V\})\neq\emptyset$ and, in a similar way as in Lemma~\ref{dlouha}, the triangle inequality implies that either $\|T-\X^1\|>2$ or $\|T-\X^2\|>2$, which is a contradiction.

Now consider the set 
\[
\hat{A}=\conv(A)\cup((D(\Y^1,2)\cap D(\Y^2,2))\setminus D(0,R)^o)  
\]
and notice that, by the previous consideration, it holds $((\hat{A}+ (\tau,0))\cap D)\in\M$ for any $\tau\in\R$. We distinguish two cases. 

Suppose first that $\|\Y^1 -\Y^2 \| - \|\X^1 - \X^2\| > 2\epsilon$, for some $\epsilon >0$. 
Then there exists $\delta>0$ such that     
\[
\lambdatwo((\hat{A}+(\delta,0))\cap D)
\geq\lambdatwo(A)+(\|\Y^1-\Y^2\|-\epsilon)\delta-(\|\X^1-\X^2\|+\epsilon)\delta > \lambdatwo(A)  \, .
\]
In other words, the set $((\hat{A}+(\delta,0))\cap D) \in \M$ has larger measure than $A$ and contradicts Running assumption~\ref{St_a}. 

Similarly, if $\|\X^1 -\X^2 \| - \|\Y^1 - \Y^2\| > 2\epsilon$, for some $\epsilon>0$, then there exists $\delta>0$ such that 
\[
\lambdatwo((\hat{A}-(\delta,0))\cap D)\geq\lambdatwo(A)-(\|\Y^1-\Y^2\|+\epsilon)\delta+(\|\X^1-\X^2\|-\epsilon)\delta> \lambdatwo(A)\, , 
\]
which contradicts Running assumption~\ref{St_a}.  
We conclude  $\|\X^1-\X^2\|=\|\Y^1-\Y^2\|$, as desired.
\qed\end{proof}

By Lemma~\ref{stejna} and Running assumption~\ref{St_a_2}, it follows that the four points $\X^1,\X^2,\Y^1,\Y^2$ form the vertices of a rectangle with center $\BS$, where $\BS$ is defined in Notation~\ref{not_1}. 
In the remaining part of this section, we use the following.
                \NOOPxbeforeThm

\begin{lemma}\label{delkaA}
We have
\[
\left(\frac12\|\X^1-\X^2\|\right)^2=\frac{-R^4+16R^2}{8(R^2+2)} \;.
\]
\end{lemma}
\begin{figure}
\begin{center}
		\includegraphics[scale=1.0]{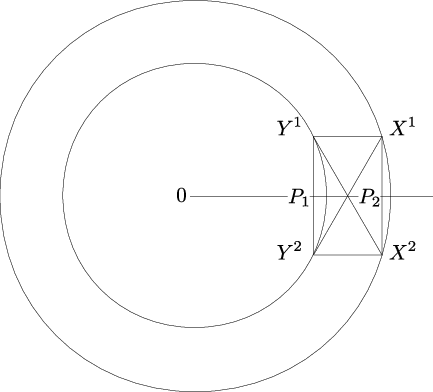}
\end{center}	
	\caption{Situation in Lemma~\ref{delkaA}.}
	\label{fig:Pythagoras}
\end{figure}
\begin{proof}
Let us denote by $P_1$ the intersection of the segment $Y^1Y^2$ with the $x$-axis and by $P_2$ the intersection of the segment $X^1X^2$ with the $x$-axis. See Figure~\ref{fig:Pythagoras}. Let us write $\xi:=\|0-P_1\|$ and $\zeta:=\|P_1-P_2\|$.
Using the Pythagorean theorem for triangles $Y^1Y^2X^2$, $0P_1Y^1$, and $0P_2X^2$ together with Lemmas~\ref{dlouha} and~\ref{stejna}.
we obtain
\begin{align}
\label{Pyth1} \|\X^1-\X^2\|^2+\zeta^2&=2^2\;,\\
\label{Pyth2} \xi^2+(\tfrac12\|\X^1-\X^2\|)^2&=2^2\;,\\
\label{Pyth3} (\xi+\zeta)^2+(\tfrac12\|\X^1-\X^2\|)^2&=R^2\;.
\end{align}
By squaring~\eqref{Pyth3}, we get
\begin{equation}\label{eq:expandme}
4\xi^2\zeta^2=(R^2-(\tfrac12\|\X^1-\X^2\|)^2-\xi^2-\zeta^2)^2\;.
\end{equation}
We now expand~\eqref{eq:expandme}. Then we use~\eqref{Pyth1} and~\eqref{Pyth2} to eliminate $\zeta^2$ and $\xi^2$, respectively. This leads to
\[0=R^4+2R^2\|\X^1-\X^2\|^2-16R^2+4\|\X^1-\X^2\|^2\;,\]
and the lemma follows.
\end{proof}
		
The proof of Theorem~\ref{isodiam_annulus} is almost complete. 
\begin{proofxx}
We can assume that the set $A$ satisfies all the assertions derived above. We use Notation~\ref{not_1}. Further, we write 
\begin{align*}
K&=\{(v_1,v_2)\in D(\X^1,2)\cap D(\X^2,2)\cap D(\Y^1,2)\cap D(\Y^2,2); v_2\notin[\x^2_2,\x^1_2]\}\;,\\
O&=\{(x,y)\in D;\ y\in[\x^2_2,\x^1_2]\text{ and }x>0\}\;.
\end{align*}
Last, let $L=\{(\s,\tau)\in\R^2;\ \tau\in\R\}$ be the line orthogonal to $\conv(\{0,(R,0)\})$ that contains $\BS$.

Clearly we have $A\subset O\cup K$ and $K\subset D$. Moreover, notice that $K$ is symmetric with respect to $L$. Now we apply the classical Steiner symmetrization (see~\cite[p.~87]{Evans_Gariepy}) with respect to $L$ to $K\cap A$ to obtain a new set $\TA$.  This guarantees that  $\lambdatwo(\TA)=\lambdatwo(A\cap K)$. Moreover, since $\diam(K\cap A)\leq2$, we have $\diam(\TA)\leq2$. Since $K$ is symmetric with respect to $L$ we obtain $\TA\subset K\subset D$. Now observe that for every $V\in K$ we have $\diam(O\cup\{V\})\leq2$. Since $\TA\subset K$ and $\diam(\TA)\leq2$ we have $\diam(\TA\cup O)\leq2$, and it follows that $\TA\cup O$ is $D$-maximal. Since $A=A^{st}$, $\TA$ is also symmetric with respect to the line $\{(\tau,0);\ \tau\in\R\}$. Thus $\TA$ is symmetric with respect to $\BS$. Since $\diam(\TA)\leq2$ we have $\TA\subset D(\BS,1)$. Clearly, $O\subset D(\BS,1)$. Hence $D(\BS,1)\cap D$ is $D$-maximal. Therefore, all the previously derived properties apply also to the set $D(\BS,1)\cap D$. Note that the set $D(\BS,1)\cap D$ is of the form as in Figure~\ref{fig:Annulus}. So, all it takes now is to calculate the area of $D(\BS,1)\cap D$. 
\begin{fact}\label{fact:areaA}
Recall that the constatnt $a$ was defined in the statement of Theorem~\ref{isodiam_annulus}. We have
\begin{align*}
\lambdatwo(D(\BS,1)\cap D)=R^2\arcsin\left(\frac{\ba}{R}\right)-4\arcsin\left(\frac{\ba}{2}\right)+2\arccos(\ba)
\;.
\end{align*}
\end{fact}
\begin{proof}
Denoting by $X_*^1,X_*^2,Y_*^1,Y_*^2$ the ``corners'' of $D(\BS,1)\cap D$ (as in Notation~\ref{not_1}), Lemma~\ref{delkaA} gives 
\[ 
\left(\frac12\left(\|X_*^1-X_*^2\|\right)\right)^2=\frac{-R^4+16R^2}{8(R^2+2)}=a^2 \;.\]
\begin{figure}
\begin{center}
		\includegraphics[scale=1.0]{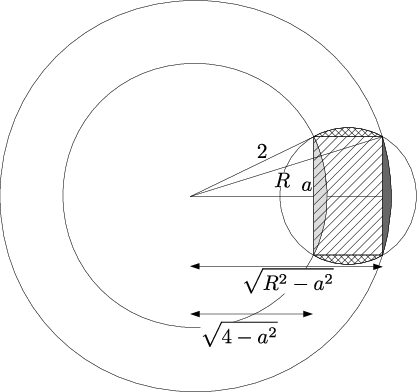}
\end{center}	
	\caption{Situation in Fact~\ref{fact:areaA}.}
	\label{fig:AreaD}
\end{figure}To calculate the area of $D(\BS,1)\cap D$,
we compare it to the
rectangle $X_*^1Y_*^1Y_*^2X_*^2$ whose area is $2\ba(\sqrt{R^2 - \ba^2}-\sqrt{4 - \ba^2})$.
The rectangle is depicted with a line-hatching in Figure~\ref{fig:AreaD}.
The area depicted in dark grey is $R^2\arcsin\left(\frac{\ba}{R}\right)-\ba\sqrt{R^2 - \ba^2}$. The area of each of the parts depicted with a cross-hatching is $\arccos(\ba)-{\tfrac 12}\ba(\sqrt{R^2 - \ba^2}-\sqrt{4 - \ba^2})$.
Last, we need to subtract the part depicted in light grey, area of which is $4\arcsin\left(\frac{\ba}{2}\right)-\ba\sqrt{4 - \ba^2}$.
Therefore,
\[
\lambdatwo(D(\BS,1)\cap D)=R^2\arcsin\left(\frac{\ba}{R}\right)-4\arcsin\left(\frac{\ba}{2}\right)+2\arccos(\ba).\]
This finishes the proof of Fact~\ref{fact:areaA} and hence also of Theorem~\ref{isodiam_annulus}.
\qed  
\end{proof}
\end{proofxx}

\section{Concluding remarks}\label{conclusion}

\subsection{Problem~\ref{gen_isodiam} in higher dimension}

When $d\le 4$, we conjecture that a measurable subset $A\subset \mathbb{R}^d$ for which $\mathcal{G}_A$ is $K_k$-free satisfies $\lambda_d(A)\le (k-1)\cdot \omega_d$. Perhaps rather surprisingly, for $d\ge 5$ the answer is different. This can already be seen from the case of $K_3$-free sets. 
Notice that the radius of a $2$-ball that circumscribes an equilateral triangle whose sides are equal to $2$ is equal to $2/\sqrt{3}$. Now it is easy to see that any $d$-ball of radius $2/\sqrt{3}$ (for arbitrary $d\ge 2$) is $K_3$-free. The volume of such a $d$-ball is equal to $\left(2/\sqrt{3}\right)^d\cdot\omega_d$. 
Now it is not difficult to see that $2\omega_d < \left(2/\sqrt{3}\right)^d\cdot\omega_d$, when $d\ge 5$, and therefore the optimal $K_3$-free set is not a disjoint union of two unit balls that are at sufficiently large distance. We do not have a conjecture for the optimal set in higher dimensions.  

Notice that in Theorem~\ref{2d_mantel} we found the best possible upper bound on the measure of the ``edge set'' of a $2$-dimensional $K_3$-free set by exploiting the fact that the optimal configuration maximizing the measure of the ``vertex set'' is a disjoint union of two unit balls that are at distance at least $2$. From the discussion of the previous paragraph, we know that a disjoint union of two unit balls is not the optimal configuration in higher dimensions and therefore, and perhaps rather surprisingly, there might be instances for which the set $A$ which maximizes $e(\mathcal{G}_A)$, in the setting of  Problem~\ref{gen_isodiam}, is different from the set $A$ which maximizes $\lambda_d(A)$.

\subsection{Isodiametric inequality for annuli in higher dimension}
Central to our proof of Theorem~\ref{2d_mantel} was the isodiametric inequality for annuli, Theorem~\ref{isodiam_annulus}. However, Theorem~\ref{isodiam_annulus} was stated only in dimension $2$. It would be of interest to extend the result to other dimensions.
\NOOPxbeforeThm
\begin{problem}\label{annulus_problem}
Let $R\in (2,4]$ and consider the set 
$D= D(0,R)\setminus D(0,2)^o \subset \mathbb{R}^d$. 
Suppose that $A$ is a compact subset of $D$ such that $\diam(A)\le 2$. What is a sharp upper bound on the $\lambda_d$-measure of $A$?
\end{problem}

Problem~\ref{annulus_problem} can in turn be employed to find the maximum $\lambda_d$-measure of a $K_3$-free set.  
It appears that an approach that is similar to the approach of Section~\ref{vasek} can be employed in higher dimensions. The only subtlety is in the calculations involving the higher-dimensional analogue of the circular symmetrization and at the moment it is unclear how to proceed.  
We leave this as an open problem for the reader and we hope that we will be able to report on that matter in the future. 

\subsection{The structure of large-distance graphs}
What is the structure of large-distance graphs? There are many ways to formalize this problem, but let us put forward a particular one.
\begin{problem}\label{prob:dens}
Given a dimension $d\ge 1$, describe the set 
$$\mathcal{S}_d:=\left\{\left(\lambda_{v(H)\cdot d}(\mathcal{G}_A\langle H\rangle)\right)_{\mbox{$H$ graph}}\::\: \mbox{$A\subset\mathbb{R}^d$ of finite measure} \right\}\subset \mathbb{R}^{\mbox{graphs}}\;.$$
\end{problem}
Recall that in Remark~\ref{remIsodiametric} and directly beneath it we pointed out that Problem~\ref{prob:dens} is not scale-free. Even for $d=1$, the answer does not seem obvious.

Let us note that the counterpart of Problem~\ref{prob:dens} is one of the cornerstones of the theory of limits of dense graph sequences. More specifically, Lov\'asz and Szegedy~\cite{lovsze} characterized the set
\begin{equation}\label{eq:T}
\mathcal{T}:=\left\{\left(t(H,W)\right)_{\mbox{$H$ graph}}\::\: \mbox{$W$ graphon}\right\}\subset \mathbb{R}^{\mbox{graphs}}\;.
\end{equation}

Note that if $W$ is a graphon associated to a set $A\subset \mathbb{R}^d$ (as we did in Section~\ref{moonMoser}), then
$$\lambda_{v(H)\cdot d}(\mathcal{G}_A\langle H\rangle)=\lambda_d(A)^{v(H)}\cdot t(H,W)\quad\mbox{for each $H$}.$$
Thus, for each $d$, the set $\mathcal{S}_d$ is a subset of a suitable transformed (by the factors $\lambda_d(A)^{v(H)}$) set $\mathcal{T}$. It seems that when $d$ is large, this set inclusion is actually not far from an equality.

\section*{Acknowledgements}
We thank the anonymous referee for his or her comments.

\end{document}